\newtheorem{thm}{Theorem}[section]
\newtheorem{lem}[thm]{Lemma}
\newtheorem{cor}[thm]{Corollary}
\numberwithin{equation}{section}
\begin{document}

	\title[Chern-Simons-Higgs System]{Almost critical regularity of non-abelian Chern-Simons-Higgs system in the Lorenz gauge}

	\author[Y. Cho]{Yonggeun Cho}
	\address{Department of Mathematics, and Institute of Pure and Applied Mathematics, Jeonbuk National University, Jeonju 54896, Republic of Korea}
	\email{changocho@jbnu.ac.kr}
	
    \author[S. Hong]{*Seokchang Hong}
    \address{Department of Mathematical Sciences, Seoul National University, Seoul 08826, Republic of Korea}
    \email{seokchangh11@snu.ac.kr}

	\thanks{*: Corresponding author}
	\thanks{2010 {\it Mathematics Subject Classification.} 35Q55, 35Q40.}
	\thanks{{\it Key words and phrases.} Chern-Simons-Higgs system, non-abelian gauge theory, almost critical regularity, null structure, bilinear estimates, failure of $C^2$ smoothness}
	
	\begin{abstract}
		In this paper we consider a Cauchy problem on the self-dual relativistic non-abelian Chern-Simons-Higgs model, which is the system of equations of $\mathfrak{su}(n)\, (n \ge 2)$-valued matter field $\phi$ and gauge field $A$. Based on the frequency localization as well as the null structure we show the local well-posedness in Sobolev space $H^{s+\frac12} \times H^s$ for $s>\frac14$. We also prove that the solution flow map $(\phi(0), A(0)) \mapsto (\phi(t), A(t))$ fails to be $C^2$ at the origin of $H^s \times H^\sigma$ when $\sigma < \frac14$ regardless of $s \in \mathbb R$. This means the regularity $H^s$, $s>\frac14$ is almost critical.
\end{abstract}

		\maketitle

\section{Introduction}


The Chern-Simons theory effectively describes $1+2$ dimensional physical phenomena in condensed matter physics.
In general, particles interacting via the Chern-Simons gauge acquire fractional statistics, which play a role in the fractional quantum Hall effect and also in high temperature superconductivity \cite{bajack}. After Chern and Simons first introduced geometric invariants \cite{chernsimons}, Chern-Simons gauge theory gets a lot of interest in physicists and mathematicians. For instance see \cite{chaeoh, dunne, dunne2} and references therein.

Recently, many mathematicians have studied various dispersive partial differential equations coupled with Chern-Simons gauges, especially, Chern-Simons-Dirac system (CSD) and Chern-Simons-Higgs system (CSH) under several gauge conditions; temporal gauge $A_0=0$, Coulomb gauge $\partial^jA_j=0$, and Lorenz gauge $\partial^\mu A_\mu=0$. It is well-known that in Lorenz gauge, (CSD) is rewritten as a system of nonlinear wave equation coupled with Dirac equation. The authors of \cite{huh, huh2, huhoh, pech} studied the low regularity solutions to (CSD) under the Lorenz gauge condition for which the Sobolev space $H^{\frac14}$ is expected to be the critical space in the sense of well-posedness \cite{oka}. On the other hand, (CSD) becomes a cubic Dirac equation with an elliptic structure in the Coulomb gauge. A number of results on low regularity solutions to (CSD) in the Coulomb gauge have appeared in \cite{boucanma, oka}. 

At the same time, the Higgs models in $1+2$ dimensions have attracted attention of mathematicians as well as physicists. A lot of works have been devoted to the low regularity theory of both abelian and non-abelian (CSH). See \cite{bou, chen, huh, huhoh, oh, yuan, yuan2} and references therein. 
However, despite the effort, progress has been made slow on the mathematical analysis of (CSH), which is due to the complexity of structure. The sharpness of regularity has not been known so far especially about non-abelian (CSH). In this paper, in the basis of antecedent works we pursue a rigorous well-posedness theory and make an endeavor to provide an almost optimal regularity on the full model of self-dual relativistic non-abelian Chern-Simons-Higgs system in the Lorenz gauge.

Let us begin with the mathematical setup for the non-abelian Chern-Simons-Higgs system. Let $G$ be a compact Lie group and $\mathfrak g$ its Lie algebra. For the sake of simplicity, we shall assume $ G = SU(n,\mathbb C),\ n \ge 2$ (the group of unitary matrices of determinant one). Then $\mathfrak g=\mathfrak{su}(n,\mathbb C)$ is the algebra of trace-free skew-Hermitian matrices whose infinitesimal generators are denoted by $T^a$ $(a = 1, 2, \cdots, n^2-1)$, and are traceless Hermitian matrices. (For example, $\mathfrak g = \mathfrak{su}(2,\mathbb C)$, $T^a$ is chosen to be  Pauli matrices; $i\sigma^a$, $a = 1,2,3$.)

For a given $\mathfrak g$-valued gauge field $A$, the component $A_\mu$ is written as $A_\mu(t, x) = A_{\mu, a}(t, x)T^a \ (a = 1,  \cdots n^2-1)$. We then define the curvature $F = dA + [A, A]$. More explicitly, given $A_\mu :\mathbb R^{1+2}\rightarrow\mathfrak g$, we define $F_{\mu\nu}$ by
$$
F_{\mu\nu} = \partial_\mu A_\nu - \partial_\nu A_\mu + [A_\mu, A_\nu].
$$
The associated covariant derivative is denoted by $\mathcal{D}_{\mu} = \partial_\mu+[A_\mu,\cdot]$. The matter field $\phi$ is also assumed to be $\mathfrak{su}(n)$-valued function and thus $\phi = \phi_aT^a$ because the most natural and interesting physical case seems to be with the matter fields and gauge fields in the same Lie algebra representation \cite{dunne, dunne2}.
Throughout this paper, we adopt the Einstein summation convention, where Greek indices refer to 0,1,2 and Latin indices $j, k$ refer to 1,2.
Indices are raised or lowered with respect to the Minkowski metric $\eta$ with signature $(+,-,-)$.

The Lagrangian density of the $1 + 2$ dimensional non-abelian relativistic Chern-Simons-Higgs system is defined by
$$
\mathcal L = -\frac\kappa2\epsilon^{\mu\nu\alpha}{\rm Tr}(\partial_\mu A_\nu A_\alpha+\frac23A_\mu A_\nu A_\alpha)+{\rm Tr}((\mathcal{D}_{\mu}\phi)^\dagger(\mathcal D^\mu\phi))-V(\phi,\phi^\dagger),
$$
where $V(\phi,\phi^\dagger)$ is the Higgs potential given by
$$
V(\phi,\phi^\dagger) = \frac{1}{\kappa^2}{\rm Tr}\big(([[\phi,\phi^\dagger],\phi]-v^2\phi)^\dagger([[\phi,\phi^\dagger],\phi]-v^2\phi)\big).
$$
Higgs potential is of sixth order and self-dual form, which is designed for a lower bound of energy. The constant $v > 0$ measures either the scale of the broken symmetry or the subcritical temperature of the system \cite{yang}. The $\epsilon^{\mu\nu\alpha}$ is the totally skew-symmetric tensor with $\epsilon^{012}=1$. ${\rm Tr}A$ and $A^\dagger$ denote the trace and $(\overline{A})^t$ the complex conjugate transpose of a matrix $A$, respectively. $[A, B] = AB - BA$ is the matrix commutator.

The Euler-Lagrange equation of the above Lagrangian density is
\begin{align}\label{el-csh}
\left\{
\begin{array}{l}
\mathcal{D}_{\mu}\mathcal D^\mu\phi = -\mathcal V(\phi,\phi^\dagger),\\
 F_{\mu\nu} = \epsilon_{\mu\nu\alpha}J^\alpha ,
\end{array}\right.\end{align}
where $J^\mu$ is defined by $J^\mu = [\phi^\dagger, \mathcal D^\mu\phi] - [(\mathcal D^\mu\phi)^\dagger,\phi]$ and $\mathcal V(\phi,\phi^\dagger)$ is derived by $\mathcal V(\phi,\phi^\dagger)=\dfrac{\partial V(\phi,\phi^\dagger)}{\partial\phi^\dagger}$. For simplicity, we assumed that the coupling constant $\kappa$ in front of $F_{\mu\nu}$ is $1$ in this paper. The potential $\mathcal V$ consists of linear, cubic, and quintic terms of $\phi$ and $\phi^\dagger$. (For details, see Appendix below.) In particular, it contains linear term $-2v^4\phi$ which contributes as a Higgs mass $m = \sqrt{2}v^2$ and gives a relativistic nature to \eqref{el-csh}. The initial data set for the system comprises  $(f, g, a_0, a_1, a_2)$, where $(\phi, \partial_t\phi)(0, x) = (f(x), g(x))$ and $A_{\mu}(0) = a_\mu$.

Now we take $\partial^\mu$ of the second equation in \eqref{el-csh} and use the Lorenz gauge $\partial^\mu A_\mu = 0$. Then we arrive at the following system of wave equations which describes the time evolution of the fields $A_\mu, \phi$.
\begin{align}\label{csh}
\left\{
\begin{array}{l}
\square\phi=-2\left[A^{\mu}, \partial_{\mu} \phi\right]-\left[A_{\mu},\left[A^{\mu}, \phi\right]\right] - \mathcal V(\phi,\phi^\dagger), \\
\square A_{\mu}=\left[\partial^{\nu} A_{\mu}, A_{\nu}\right]-\epsilon_{\mu \nu \alpha}\left(Q^{\nu \alpha}\left(\phi^{\dagger}, \phi\right)+Q^{\nu \alpha}\left(\phi, \phi^{\dagger}\right)\right)  -\epsilon_{\mu \nu \alpha}\partial^{\nu}\left(\left[\phi^{\dagger},\left[A^{\alpha}, \phi\right]\right]-\big[\left[A^{\alpha}, \phi\right]^{\dagger}, \phi\big]\right) \\
 (\phi, \partial_t \phi)(0) = (f, g), \quad A_\mu(0) = a_\mu,\\
 \partial_t A_0(0) = -\partial^j a_j, \quad \partial_tA_j(0) = \partial_j a_0 - [a_0, a_j] + \epsilon_{0jk}([f^\dagger, \partial^k f] - [(\partial^k f)^\dagger, f]),
\end{array}\right.\end{align}
where $Q_{\alpha\beta}(u, v) = \partial_\alpha u\partial_\beta v-\partial_\beta u\partial_\alpha v$. Note that due to the Lorenz gauge condition the initial data should satisfy the following constraint equation:
\begin{align}\label{constraint}
\partial_1a_2 - \partial_2a_1 + [a_1, a_2] = [f^\dagger, gf + [a^0, f]] - [(gf)^\dagger + [a^0, f]^\dagger, f].
\end{align}
We state our result on the local well-posedness:

\begin{thm}\label{lwp}
Let $s>\frac14$. Suppose that $(f, g) \in H^{s+\frac12} \times H^{s-\frac12}$, $a_\mu\in H^s$ and they satisfy \eqref{constraint}. Then the \eqref{csh} is locally well-posed in $H^{s+\frac12} \times H^{s-\frac12} \times H^{s}$. That is, there exists   $T = T(f, g, a_\mu,m) > 0$ such that there exist unique solution $(\phi, \partial_t \phi, A_\mu) \in C((-T,T);H^{s+\frac12} \times H^{s-\frac12} \times H^s)$ of \eqref{csh}, which depends continuously on the initial data.
\end{thm}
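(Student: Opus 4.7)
The plan is to set up a Picard iteration for the system \eqref{csh} in Bourgain-Klainerman-Machedon spaces $X^{s,b}$ adapted to the wave operator $\square$. Concretely, I look for a solution with $\phi \in X^{s+\frac12,\,\frac12+\epsilon}$ and $A_\mu \in X^{s,\,\frac12+\epsilon}$ for $s > \frac14$ and a small $\epsilon > 0$, rewriting \eqref{csh} in Duhamel form with a smooth time cutoff $\chi(t/T)$ of short interval $(-T,T)$. By the standard $X^{s,b}$ linear theory together with the embedding $X^{s,b} \hookrightarrow C_t H^s_x$ valid for $b > \frac12$, the well-posedness reduces to a package of multilinear estimates on the nonlinear terms of \eqref{csh}. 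The contraction mapping then produces existence, uniqueness, and continuous dependence simultaneously on a time interval $(-T,T)$ with $T$ depending only on the initial data norms and the Higgs mass $m$.

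The required multilinear estimates are of three main types. First, the null form estimate
\begin{equation*}
\|Q^{\nu\alpha}(\phi_1, \phi_2)\|_{X^{s-1,\,-\frac12+2\epsilon}} \les \|\phi_1\|_{X^{s+\frac12,\,\frac12+\epsilon}} \|\phi_2\|_{X^{s+\frac12,\,\frac12+\epsilon}},
\end{equation*}
which controls the main bilinear input to the $A$-equation via the symbol bound $|q(\xi_1,\xi_2)| \les |\xi_1||\xi_2|\sin\angle(\xi_1,\xi_2)$ and a dyadic Littlewood-Paley analysis. Second, the semilinear wave-type estimate
\begin{equation*}
\|A^\mu \partial_\mu \phi\|_{X^{s-\frac12,\,-\frac12+2\epsilon}} \les \|A\|_{X^{s,\,\frac12+\epsilon}} \|\phi\|_{X^{s+\frac12,\,\frac12+\epsilon}}
\end{equation*}
for the main interaction driving the $\phi$-equation. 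Third, trilinear and quintilinear estimates on $[A_\mu,[A^\mu,\phi]]$, $[\partial^\nu A_\mu, A_\nu]$, the cubic correction $\partial^\nu([\phi^\dagger,[A^\alpha,\phi]]-[[A^\alpha,\phi]^\dagger,\phi])$, and the Higgs potential $\mathcal V(\phi,\phi^\dagger)$, which factor through the previous bilinear inputs together with Sobolev embedding and wave Strichartz estimates in $\mathbb R^{1+2}$.

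The main obstacle is the bilinear estimate on $A^\mu \partial_\mu \phi$. This product carries no manifest null structure, and a direct Strichartz-based argument would require $s > \frac12$. To descend to $s > \frac14$, two inputs must be combined. The first is a dyadic decomposition both in the spatial frequency and in the modulation $|\tau|-|\xi|$: in the high-high interaction the modulation weight absorbs the derivative falling on $\phi$, while the unbalanced high-low and low-high regimes are handled by standard wave Strichartz estimates. The second, and more delicate, input is the hidden null structure of $A$ itself: the Lorenz gauge $\partial^\mu A_\mu=0$ together with the Chern-Simons relation $F_{\mu\nu}=\epsilon_{\mu\nu\alpha}J^\alpha$ forces the inhomogeneous part of $A$ to be driven by $Q^{\nu\alpha}(\phi^\dagger,\phi)$, so that upon one Duhamel substitution $A^\mu \partial_\mu \phi$ reveals itself as a trilinear expression in $\phi$ that inherits the null cancellation of $Q^{\nu\alpha}$, recovering the gain required at the critical level.

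Finally, the Lipschitz estimates needed to close the contraction for differences $(\phi_1-\phi_2,\, A_1-A_2)$ follow from the very same multilinear bounds by polarization, using that every nonlinearity is polynomial. This gives the continuous dependence on data claimed in the statement, and uniqueness in the full class $C((-T,T);H^{s+\frac12}\times H^{s-\frac12}\times H^s)$ follows from the persistence of regularity together with the standard $X^{s,b}$ embedding.
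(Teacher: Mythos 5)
Your overall skeleton (Picard iteration in $X^{s,b}$, reduction to multilinear estimates, null form bound for $Q^{\nu\alpha}(\phi^\dagger,\phi)$, Bernstein/duality for the cubic and quintic terms) matches the paper. The genuine divergence, and the gap, is in how you propose to handle $[A^\mu,\partial_\mu\phi]$. You suggest substituting the Duhamel formula for $A$ so that the product becomes a trilinear expression in $\phi$ inheriting the null cancellation of $Q^{\nu\alpha}$. This leaves the homogeneous part untreated: after the substitution you still face $A^{{\rm hom},\mu}\partial_\mu\phi$, where $A^{\rm hom}$ is a free wave with $H^s$ data, and for a generic free wave this product has no null structure and cannot be placed in $X^{s-\frac12,b-1}$ when $s$ is near $\frac14$ (the parallel, low-modulation interaction is exactly the one your "modulation absorbs the derivative" remark does not reach). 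Moreover the inhomogeneous part of $A$ is driven not only by $Q^{\nu\alpha}(\phi^\dagger,\phi)$ but also by $[\partial^\nu A_\mu,A_\nu]$ and the cubic current, so one substitution does not produce a closed trilinear expression. The paper avoids all of this with a pointwise algebraic identity (Lemma \ref{null-csh}): splitting $\mathbf A=\mathbf A^{\rm df}+\mathbf A^{\rm cf}$ and using the Lorenz gauge $\partial^kA_k=\partial_tA_0$, one writes $[A^\mu,\partial_\mu\phi]$ exactly as a combination of $Q_{jk}[D^{-1}R^lA^m,\phi]$ and $Q_{j0}[R^jD^{-1}A_0,\phi]$, i.e.\ the null structure is extracted from the \emph{whole} of $A$ (free part included) at the bilinear level, and then Selberg's dyadic bilinear restriction estimates close the bound for $s>\frac14$.

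A second, related gap: you file $[\partial^\nu A_\mu,A_\nu]$ among the "trilinear and quintilinear" terms to be handled by Sobolev embedding and Strichartz. It is a bilinear term, and at regularity $A\in H^s$ with $s$ near $\frac14$ the unstructured product $\partial A\cdot A$ does not map into $H^{s-1}$ by Strichartz-type arguments; the paper needs the same gauge-based identity for it (Corollary \ref{null-csh2}), reducing it to $Q_{jk}[D^{-1}R^lA^m,A_\mu]$ and $Q_{j0}[R^jD^{-1}A_0,A_\mu]$. Once both identities are in place, your remaining steps (dyadic modulation analysis for the high-high regime, composition of bilinear estimates for the genuinely cubic terms, polarization for the difference estimates) are consistent with what the paper does, but without Lemma \ref{null-csh} and Corollary \ref{null-csh2} the two critical bilinear estimates do not close at $s>\frac14$.
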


Here, $H^s$ is the usual inhomogeneous Sobolev space whose norm is given by $$\|f\|_{H^s} = (\sum_{N:\,\rm dyadic}(N^s\|P_{|\xi|\sim N}f\|_{L^2})^2)^\frac12,$$ where $P_{|\xi|\sim N}$ is the Littlewood-Paley projection on $\{ \xi\in\mathbb R^2 : |\xi|\sim N \}$. Instead of applying global estimates of \cite{danfoselb}, we make fully use of localization of space-time Fourier side. Thanks to the dyadic decomposition of space-time frequencies, we gain a {\it lower} regularity well-posedness, which will turn out to be sharp, than obtained in \cite{yuan2}. We exploit the null structure hidden in \eqref{csh} as \cite{yuan2}. In fact, this null structure has a similar form as Yang-Mills equation in $1+3$ dimensions introduced in \cite{selbtes, tes}.


Let us now deal with the smoothness of the flow map $(\phi(0), A(0)) \mapsto (\phi(t), A(t))$. Since the nonlinearity is algebraic, one may expect the flow will be smooth in local time if the problem is well-posed. However, such smoothness can be shown to fail when the initial data of $\phi$, $A$ are rougher than in $H^\frac12$, $H^\frac14$, respectively, which can be stated as follows.
\begin{thm}\label{fail-smooth}
Let $s \in \mathbb R$, $\sigma < \frac14$, and $T > 0$. Then the flow map of $(\phi(0), A(0)) \mapsto (\phi(t), A(t))$ from $H^{s} \times H^\sigma$ to $C([-T, T]; H^{s} \times H^\sigma)$ cannot be $C^2$ at the origin. Furthermore, if $s < \frac12$, $\sigma \in \mathbb R$, then the flow map cannot be $C^3$ at the origin.
\end{thm}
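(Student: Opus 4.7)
The plan is to follow the Bourgain--Molinet--Ribaud--Tzvetkov strategy: if the flow map is $C^2$ (respectively $C^3$) at the origin, then its second (respectively third) Fr\'echet derivative provides a bounded bilinear (respectively trilinear) operator from the data space to the solution space, which can be read off from the Picard expansion of \eqref{csh}. To contradict smoothness we identify this operator explicitly and then falsify its boundedness using frequency-localized test data.

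For the $C^2$ failure at $\sigma<\tfrac14$, the cleanest route exploits the $AA$ bilinear term $[\partial^\nu A_\mu,A_\nu]$ in the $A$ equation, which becomes a genuine spacetime null form once the Lorenz gauge is imposed. Take admissible initial data $(\phi(0),\partial_t\phi(0),A_\mu(0))=(0,0,\mu a_\mu)$ with $a_\mu$ on the constraint manifold of \eqref{constraint}. Expanding $A_\mu=\mu A_\mu^{(1)}+\mu^2A_\mu^{(2)}+\cdots$, the linear term $A_\mu^{(1)}$ is the free wave evolution of $(a_\mu,0)$, and
\[
A_\mu^{(2)}(t)=\int_0^t\frac{\sin((t-s)|\nabla|)}{|\nabla|}\bigl[\partial^\nu A_\mu^{(1)},A^{(1)}_\nu\bigr](s)\,ds.
\]
Using $\sum_\nu\xi_2^\nu\widehat{A_\nu^{(1)}}(\xi_2)=0$ (Lorenz), the symbol $\sum_\nu\xi_1^\nu\widehat{A_\nu^{(1)}}(\xi_2)$ of the integrand rewrites as $\sum_\nu(\xi_1-\xi_2)^\nu\widehat{A_\nu^{(1)}}(\xi_2)$, a standard null form. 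If the flow were $C^2$ at $0$, then $(a,a)\mapsto A_\mu^{(2)}(T,\cdot)$ would be a bounded bilinear map from $H^\sigma\times H^\sigma$ into $H^\sigma$ for some fixed small $T>0$.

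To contradict this, choose $\widehat a$ supported in a thin angular sector near $Ne_1$ on $|\xi|\sim N$, normalized so that $\|a\|_{H^\sigma}=1$. Decomposing $A^{(1)}$ into $\pm$ half-waves, the dominant contribution comes from the $(+,-)$ self-interactions whose spacetime Fourier transform concentrates at $|\xi|\sim N$ with modulation $\bigl||\tau|-|\xi|\bigr|\sim N^{-1}$, i.e.\ close to the light cone. Balancing the null gain in $(\xi_1-\xi_2)^\nu$ against the near-cone singularity of $\square^{-1}$ yields $\|A_\mu^{(2)}(T,\cdot)\|_{H^\sigma}\gtrsim N^{1/4-\sigma}$, which is unbounded as $N\to\infty$ for every $\sigma<\tfrac14$. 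Because this counterexample uses only $A$-data, the conclusion holds regardless of $s\in\mathbb R$.

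For the $C^3$ failure at $s<\tfrac12$, turn instead to the $\phi$ equation: set $\phi(0)=\lam f$, $A(0)=0$ (with $\partial_t A_j(0)$ determined by \eqref{constraint}) and analyze the cubic-in-$\lam$ coefficient
\[
\phi^{(3)}(t)=-2\int_0^t\frac{\sin((t-s)|\nabla|)}{|\nabla|}\bigl[A^{(2)\mu},\partial_\mu\phi^{(1)}\bigr](s)\,ds+\cdots,
\]
where $A^{(2)}$ is built from $\phi^{(1)}$ via $\square^{-1}Q^{\nu\alpha}(\phi^{(1)\dagger},\phi^{(1)})$ and the constraint-induced quadratic data. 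With the same frequency-localized $f$, the cascade of two $\square^{-1}$'s---each singular because its source lies near the cone---produces a trilinear lower bound $\|\phi^{(3)}(T,\cdot)\|_{H^s}\gtrsim N^{1/2-s}$, unbounded once $s<\tfrac12$. The main technical obstacle throughout is the careful Fourier-side bookkeeping that quantifies how the null-form cancellation (of $Q^{\nu\alpha}$ or the Lorenz-induced one) balances the near-cone loss of $\square^{-1}$; this balance is precisely what pins the thresholds at $\sigma=\tfrac14$ and $s=\tfrac12$, rather than the weaker ones a naive counting would suggest.
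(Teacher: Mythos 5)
Your overall strategy (read off the second/third Picard iterate as the Fr\'echet derivative, then falsify its boundedness on Knapp-type data) is the right one and matches the paper's, but as written the argument has two genuine gaps. First, you ignore the $\mathfrak{su}(n)$-valued structure, and for this system that is fatal to the specific terms you target: if the data lie along a single generator, every commutator vanishes and the equation linearizes. In particular, with pure $A$-data your quadratic source $[\partial^\nu A^{(1)}_\mu,A^{(1)}_\nu]$ is proportional to $[T^a,T^a]=0$, and with pure $\phi$-data $\phi^{(1)}=\phi^{(1)}_1T^1$ the source $Q^{\nu\alpha}(\phi^{(1)\dagger},\phi^{(1)})+Q^{\nu\alpha}(\phi^{(1)},\phi^{(1)\dagger})=Q^{\nu\alpha}(\phi_a^*,\phi_b)[T^a,T^b]$ for your $A^{(2)}$ also vanishes (as does the constraint-induced quadratic datum $[f^\dagger,\partial^kf]$), so your $\phi^{(3)}$ cascade is identically zero. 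The paper avoids this by coupling $f=f_1T^1$ with $a_0=a_{0,2}T^2$ along \emph{different} generators, which simultaneously kills $\partial_\delta^2A$ (so no competing quadratic terms) and keeps the cross terms alive: the $C^2$ obstruction comes from the bilinear term $[A^\mu,\partial_\mu\phi]$ in the $\phi$-equation (yielding $\sigma\ge\frac14$ from $\|\partial_\delta^2\phi\|_{H^s}\lesssim\|f\|_{H^s}\|a_0\|_{H^\sigma}$), and the $C^3$ obstruction from the single-Duhamel cubic term $\epsilon_{\mu\nu\alpha}\partial^\nu[\phi^\dagger,[A^\alpha,\phi]]$ in the $A$-equation, not from an iterated $\square^{-1}\circ\square^{-1}$ cascade. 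Your cascade, even if nonvanishing, would also have to beat the competing cubic Higgs term in $\mathcal V(\phi,\phi^\dagger)$, which you do not address.

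Second, the quantitative lower bounds $\gtrsim N^{1/4-\sigma}$ and $\gtrsim N^{1/2-s}$ are asserted rather than proved, and they are the entire content of the proof. One must enumerate the sign combinations, isolate the resonant set $\mathcal R$ where the phase $\omega$ is $O(\lambda^{1/2})$ rather than $\sim\lambda$, choose $t=\varepsilon\lambda^{-1/2}$ and tune $\lambda$ so that the surviving oscillatory factors $\cos(t|\xi|)$, $\sin(t|\xi|)$ have definite size and the resonant contributions do not cancel among themselves. Your description of the dominant interaction (parallel $(+,-)$ waves producing output modulation $\sim N^{-1}$ near the cone) is also not right: for two bumps near $N e_1$ the $(+,-)$ product has output modulation $\sim N$ (far from the cone), while the near-cone contribution comes from equal signs. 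Without the resonance bookkeeping there is no way to certify that the thresholds land at $\sigma=\frac14$ and $s=\frac12$ rather than at the values a crude counting would give.
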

We proceed the proof by the argument of Knapp type example as in \cite{mst}. We investigate carefully $\mathfrak{su}(n)$-valued initial data which guarantee that a resonance of phases occurs and hence the Fourier transforms of matter field and gauge field have significant lower bounds, which enable us to get the necessary condition $s \ge \frac12$ or $\sigma \ge \frac14$ for the smoothness. In view of Theorem \ref{fail-smooth}, the LWP of Theorem \ref{lwp} is very sharp since $H^s$ is a proper subspace of $H^\frac14$ for $s>\frac14$. For the present we could not have filled the regularity or failure of smoothness fully in $H^\frac14$ by a technical reason. However, the problem will be hopefully resolved in the near future.


We end this section with the introduction of notations and organization of this paper.

\noindent\textbf{Notations.} Here we give some notations used throughout this paper. Since we only use $L_{t, x}^{2}$ norm, by $\|F\|$ we abbreviate $\|F\|_{L^{2}_{t,x}} := \sum_a \|F_a\|_{L^{2}_{t,x}}$ for $F = F_aT^a$. As usual different positive constants independent on dyadic numbers such as $N$ and $L$ are denoted by the same letter $C$, if not specified. $A \lesssim B$ and $A \gtrsim B$ means that $A \le CB$ and
$A \ge C^{-1}B$, respectively for some $C>0$. $A \sim B$ means that $A \lesssim B$ and $A \gtrsim B$.

The spatial Fourier transform and space-time Fourier transform on $\mathbb{R}^{2}$ and $\mathbb R^{1+2}$ are defined by
$$
\widehat{f}(\xi)=\int_{\mathbb{R}^{2}}e^{-ix\cdot\xi}f(x)dx,\quad \widetilde{u}(X)=\int_{\mathbb R^{1+2}}e^{-i(t\tau+x\cdot\xi)}u(t,x)dtdx,
$$
where $\tau\in\mathbb{R}$, $\xi\in\mathbb{R}^{2}$, and $X = (\tau,\xi)\in\mathbb R^{1+2}$. Also we denote $\mathcal{F}(u)=\widetilde{u}$. Then we define space-time Fourier projection operator $P_E$ by $\widetilde{P_E u}(\tau, \xi) = \chi_E \widetilde u(\tau, \xi)$, for $E \subset \mathbb R^{1+2}$. We define spatial Fourier projection operator, similarly. For example, $P_{|\xi|\sim N}$ is the Littlewood-Paley projection on $\{ \xi\in\mathbb R^2 : |\xi|\sim N \}$.

Since we prefer to use the differential operator $|\nabla|$ rather than $-i\nabla$, for the sake of simplicity, we put $D := |\nabla|$ whose symbol is $|\xi|$.

For brevity, we denote the maximum, median, and minimum of $N_0,\ N_1,\ N_2$ by
$$
N_{\max}^{012} = \max(N_0,N_1,N_2), \quad N_{\rm med}^{012} = {\rm med}(N_0,N_1,N_2), \quad N_{\min}^{012} = \min(N_0,N_1,N_2).
$$

\noindent\textbf{Organization.} In Section 2, we introduce the decomposition of d'Alembertian and $X^{s,b}$ space. Section 3 is devoted to the description on our main techniques; 2D wave type bilinear estimates and null structure. In Section 4, we give crucial estimates to prove the local well-posedness of \eqref{csh}. Here, we observe that the estimates of commutator of $\mathfrak{su}(n)$-valued functions are reduced to the nonlinear estimates of $\mathbb C$-valued functions. Then Section 5,6,7 are on the proof of bilinear estimates, trilinear estimates, and estimates of Higgs potential, respectively. In Section 8 we show the failure of smoothness.


\section{Preliminaries}

\subsection{Decomposition of d'Alembertian}
We use the standard transform given by $(\phi,\partial_t\phi)\rightarrow(\phi_+,\phi_-)$ and $(A_\mu,\partial_tA_\mu)\rightarrow(A_{\mu,+},A_{\mu,-})$ with
$$
\phi_{\pm} = \frac12\left(\phi\pm\frac{1}{iD}\partial_t\phi\right), \quad A_{\mu,\pm} = \frac12\left(A_\mu\pm\frac{1}{iD}\partial_tA_\mu\right).
$$
Then the system \eqref{csh} transforms to
\begin{align}\label{csh-decom}
\left\{
\begin{array}{l}
(i\partial_t\pm D)\phi_\pm=\pm\frac{1}{2D}\left(-2\left[A^{\mu}, \partial_{\mu} \phi\right]-\left[A_{\mu},\left[A^{\mu}, \phi\right]\right]-\mathcal V(\phi,\phi^\dagger)\right) \\
(i\partial_t\pm D) A_{\mu,\pm}=\pm\frac{1}{2D}\big(\left[\partial^{\nu} A_{\mu}, A_{\nu}\right]-\epsilon_{\mu \nu \alpha}\left(Q^{\nu \alpha}\left(\phi^{\dagger}, \phi\right)+Q^{\nu \alpha}\left(\phi, \phi^{\dagger}\right)\right)\big) \\
 \qquad \quad \mp\frac{\epsilon_{\mu \nu \alpha}}{2D}\partial^{\nu}\left(\left[\phi^{\dagger},\left[A^{\alpha}, \phi\right]\right]-\big[\left[A^{\alpha}, \phi\right]^{\dagger}, \phi\big]\right).
\end{array}\right.\end{align}

\subsection{Function spaces}
For dyadic number $N\ge1$, and $L$, we define the set
$$
K_{N,L}^\pm = \{ (\tau,\xi)\in\mathbb R^{1+2} : |\xi|\sim N,\quad |\tau\pm|\xi||\sim L \}.
$$
Then we introduce Bourgain space given by
$$
X^{s,b}_\pm = \left\{ u\in L^2 : \|u\|_{X^{s,b}_\pm}= \||\xi|^s|\tau\pm|\xi||^b\widetilde{u}(\tau,\xi)\|<\infty \right\}.
$$
We make the use of fully dyadic decompostion of spacetime-Fourier sides, and hence reformulate the $X^{s,b}_\pm$-norm by Littlewood-Paley decomposition using $K_{N,L}^\pm$:
$$
\|u\|_{X^{s,b}_\pm} = \left(\sum_{N,L}(N^sL^b\|P_{K_{N,L}^\pm}u\|)^2\right)^\frac12.
$$
Since we are only concerned with local time existence $T\le1$ throughout this paper, it is convenient to utilize our function space in the local time setting. Hence we introduce the restriction space.
The time-slab which is the subset of $\mathbb R^{1+2}$ is given by
$$
S_{T}=(-T,T)\times\mathbb{R}^{2}.
$$
We let $X^{s,b}_\pm(S_T)$ be the restriction space to the time-slab $S_T$. Recall the following embedding property for $b>\frac12$:
\begin{equation}
X_{\pm}^{s, b}\left(S_{T}\right) \hookrightarrow C\left([-T, T] ; H^{s}\right).
\end{equation}
Furthermore, it is the well-known fact that given linear initial value problem:
$$
(i\partial_t\pm D)v = G\in X^{s,b-1+\epsilon}_\pm(S_T),\quad v(0)\in H^s,
$$
for $s\in\mathbb R$, $b>\frac12$, and $0<\epsilon\ll1$, it has a unique solution satisfying
\begin{equation}\label{emb}
\|v\|_{X^{s,b}_\pm(S_T)} \lesssim \|v(0)\|_{H^s}+T^\epsilon\|G\|_{X^{s,b-1+\epsilon}_\pm(S_T)},\quad T<1.
\end{equation}

\section{Bilinear estimates and Null structure}
\subsection{Bilinear estimates}
For dyadic $N,L\ge 1$, let us invoke that
$$
K_{N,L}^{\pm} = \{ (\tau,\xi)\in\mathbb R^{1+2} : |\xi|\sim N,\;\; |\tau \pm |\xi| |\sim L \}.
$$
Now we introduce the key ingredient to handle the nonlinear terms in \eqref{csh}.
\begin{thm}[Theorem 2.1 of \cite{selb}]\label{bilinear-est}
For all $u_{1},u_{2}\in L^{2}_{t,x}(\mathbb{R}^{1+2})$ such that $\widetilde{u_{j}}$ is supported in $K_{N_{j},L_{j}}^{\pm_{j}}$, the estimate
$$
\|P_{K_{N_{0},L_{0}}^{\pm_{0}}}(u_{1}\overline{u_{2}})\| \le C\|u_{1}\|\|u_{2}\|
$$
holds with
\begin{eqnarray}
C & \sim & (N_{\min}^{012}L_{\min}^{12})^{\frac12}(N_{\min}^{12}L_{\max}^{12})^{\frac14},\label{bi-selb-1} \\
C & \sim & (N_{\min}^{012}L_{\min}^{0j})^{\frac12}(N_{\min}^{0j}L_{\max}^{0j})^{\frac14}, \quad j = 1, 2,\label{bi-selb-2} \\
C & \sim & ((N_{\min}^{012})^{2}L_{\min}^{012})^{\frac12}\label{bi-selb-3}
\end{eqnarray}
regardless of the choices of signs $\pm_{j}$.
\end{thm}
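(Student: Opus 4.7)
The plan is to reduce the bilinear $L^2$ estimate to a measure estimate on a ``resonant set'' in Fourier space via Plancherel and Cauchy--Schwarz, then to bound that measure using the dispersion relations $|\tau \pm |\xi|| \sim L$. Since the estimate is (up to conjugation) symmetric under permuting the three indices $0,1,2$, the three bounds \eqref{bi-selb-1}--\eqref{bi-selb-3} will come from applying the same scheme with different variables held fixed.

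By duality, it suffices to bound $|\langle u_0, u_1\overline{u_2}\rangle|$ with $\widetilde{u_0}$ supported in $K_{N_0,L_0}^{\pm_0}$. Writing $F_j = |\widetilde{u_j}|$, Plancherel gives
\begin{equation*}
|\langle u_0, u_1\overline{u_2}\rangle| \le \int F_0(X_0)\, F_1(X_1)\, F_2(X_1 - X_0)\, dX_0\, dX_1,
\end{equation*}
where $F_2(\,\cdot\,)$ is supported in $K_{N_2,L_2}^{\mp_2}$ because of the conjugation. Fixing $X_0 = (\tau_0,\xi_0)$ and applying Cauchy--Schwarz in $X_1$ yields
\begin{equation*}
|\widetilde{u_1\overline{u_2}}(X_0)|^2 \le |E(X_0)| \int F_1(X_1)^2 F_2(X_1 - X_0)^2\, dX_1, \quad E(X_0) := \{X_1 \in K_{N_1,L_1}^{\pm_1} : X_1 - X_0 \in K_{N_2,L_2}^{\mp_2}\}.
\end{equation*}
Integrating over $X_0 \in K_{N_0,L_0}^{\pm_0}$ and changing variables gives $\|P_{K_0}(u_1\overline{u_2})\| \le (\sup_{X_0} |E(X_0)|)^{1/2} \|u_1\| \|u_2\|$. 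The bound \eqref{bi-selb-2} comes from fixing $X_j$ ($j = 1,2$) instead of $X_0$, and \eqref{bi-selb-3} corresponds to the weakest case where no angular separation is exploited.

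The technical core is the measure estimate. Setting $\xi_2 = \xi_1 - \xi_0$ and exploiting that $|\tau_j \pm_j |\xi_j|| \sim L_j$ for each $j$, a straightforward algebraic manipulation shows
\begin{equation*}
\pm_0 |\xi_0| \mp_1 |\xi_1| \pm_2 |\xi_2| = O(L_{\max}^{012}).
\end{equation*}
The left-hand side is a standard ``hyperbolic weight'' satisfying a lower bound of the form $\gtrsim N_{\min}^{012}\, \theta^2$ in the non-parallel regime (where $\theta$ is the angle between $\pm_1\xi_1$ and $\mp_2\xi_2$), so $\theta \lesssim (L_{\max}^{012}/N_{\min}^{012})^{1/2}$. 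This localizes $\xi_1$ to the intersection of the annulus $|\xi_1|\sim N_1$ with an angular sector of width $\sim \theta$, of area $\sim N_{\min}^{12} \cdot (N_{\min}^{12} L_{\max}^{12}/N_{\min}^{012})^{1/2}$; meanwhile the constraint $|\tau_1 \pm_1|\xi_1||\sim L_1$ (using the smaller of $L_1,L_2$ after further applying the second modulation constraint) restricts $\tau_1$ to an interval of length $L_{\min}^{12}$. Multiplying, $|E(X_0)| \lesssim N_{\min}^{012} L_{\min}^{12} \cdot (N_{\min}^{12} L_{\max}^{12})^{1/2}$, which after taking the square root gives precisely \eqref{bi-selb-1}. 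For \eqref{bi-selb-3}, when no angular gain is available (parallel or fully resonant sign combinations), one falls back on the crude bound $|E(X_0)| \lesssim (N_{\min}^{012})^2 L_{\min}^{012}$ obtained by using only the moduli $|\xi_j|\sim N_j$ and one modulation.

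The main obstacle is justifying the angular lower bound $|\pm_0|\xi_0| \mp_1|\xi_1| \pm_2|\xi_2|| \gtrsim N_{\min}^{012}\theta^2$ uniformly in the eight sign combinations, together with handling the degenerate ``$(++) \to (+)$'' configuration where the three null cones resonate and the angular bound collapses; this is exactly where estimate \eqref{bi-selb-3} becomes sharp and must be invoked instead of \eqref{bi-selb-1}--\eqref{bi-selb-2}. The bookkeeping across sign cases and the Fourier-support geometry are the technical heart of the proof. Since the result is quoted verbatim from \cite{selb}, in practice I would simply cite it, but the measure-counting scheme above, going back to Klainerman--Machedon and Foschi--Klainerman, is the standard route.
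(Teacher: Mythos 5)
The paper itself offers no proof of this statement---it is quoted verbatim as Theorem 2.1 of Selberg's bilinear restriction paper---and your reduction via duality, Plancherel and Cauchy--Schwarz to a bound on $\sup_{X_0}|E(X_0)|$, with the angular localization coming from the hyperbolic weight $\pm_0|\xi_0|\mp_1|\xi_1|\pm_2|\xi_2|=O(L_{\max}^{012})$ and the crude volume count giving \eqref{bi-selb-3}, is exactly the scheme of the cited source, so you are on essentially the same route. One minor slip: your intermediate sector-area expression $N_{\min}^{12}\bigl(N_{\min}^{12}L_{\max}^{12}/N_{\min}^{012}\bigr)^{1/2}$ is dimensionally inconsistent (the correct count is a radial extent $\sim N_{\min}^{012}$ times a transverse width $\sim (N_{\min}^{12}L_{\max}^{12})^{1/2}$, after using $L_{\min}^{12}$ for the $\tau_1$-interval), but your final measure bounds, and hence \eqref{bi-selb-1}--\eqref{bi-selb-2}, are the correct ones, with the full sign-case bookkeeping left to the cited reference as you indicate.
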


\subsection{Bilinear interaction}
The space-time Fourier transform of the product $\phi_{2}^{\dagger}\phi_{1}$ of two $\mathfrak g$-valued fields $\phi_{1}$ and $\phi_{2}$ is written as
$$
\widetilde{\phi_{2}^{\dagger}\phi_{1}}(X_{0})=\int_{X_{0} = X_{1} - X_{2}}\widetilde{\phi_{2}}^{\dagger}(X_{2})\widetilde{\phi_{1}}(X_{1})\, dX_{1}\,dX_{2},
$$
where $\phi^{\dagger}$ is the transpose of complex conjugate of $\phi$. Here the relation between $X_{1}$ and $X_{2}$ in the convolution integral of fields is given by $X_{0} = X_{1} - X_{2}$ so called bilinear interaction. This is also the case for the product of two complex scalar fields. 

The following lemma is on the bilinear interaction.
\begin{lem}[Lemma 2.2 of \cite{selb}]\label{bi-int}
Given a bilinear interaction $(X_{0},X_{1},X_{2})$ with $\xi_{j} \neq 0$, and signs $(\pm_{0},\pm_{1},\pm_{2})$, let $h_{j}=\tau_{j}\pm_{j}|\xi_{j}|$ and $\theta_{12} = |\angle(\pm_{1}\xi_{1},\pm_{2}\xi_{2})|$. Then we have
$$
\max(|h_{0}|,|h_{1}|,|h_{2}|) \gtrsim \min(|\xi_{1}|,|\xi_{2}|)\theta_{12}^{2}.
$$
Moreover, we either have
$$
|\xi_0|\ll |\xi_1|\sim |\xi_2|,\quad {\rm and} \quad \pm_1\neq\pm_2,
$$
in which case
$$
\theta_{12}\sim 1 \quad {\rm and} \quad \max(|h_0|,|h_1|,|h_2|)\gtrsim \min(|\xi_1|,|\xi_2|),
$$
or else we have
$$
\max(|h_0|,|h_1|,|h_2|)\gtrsim \frac{|\xi_1||\xi_2|}{|\xi_0|}\theta_{12}^2.
$$
\end{lem}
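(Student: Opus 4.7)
The plan is to reduce the estimate to an algebraic identity on the space--time Fourier side and then expose the angle $\theta_{12}$ via the law of cosines. First I would use the bilinear interaction constraint $X_0 = X_1 - X_2$; taking the temporal component gives $\tau_0 = \tau_1 - \tau_2$, and so, writing $\sigma_j := \pm_j \in \{+1,-1\}$,
\[
\sigma_0 |\xi_0| - \sigma_1 |\xi_1| + \sigma_2 |\xi_2| = h_0 - h_1 + h_2,
\]
whence $\bigl|\sigma_0 |\xi_0| - \sigma_1 |\xi_1| + \sigma_2 |\xi_2|\bigr| \le 3\max_j |h_j|$. The problem is then to bound this signed combination of moduli from below.

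Second, I would apply the law of cosines to $\xi_0 = \xi_1 - \xi_2$. With $s := \sigma_1\sigma_2 \in \{\pm 1\}$, one has $\xi_1 \cdot \xi_2 = s\,|\xi_1||\xi_2|\cos\theta_{12}$, so
\[
|\xi_0|^2 - (|\xi_1| - s|\xi_2|)^2 = 2s\,|\xi_1||\xi_2|(1 - \cos\theta_{12}).
\]
Factoring via difference of squares and using the uniform comparability $1 - \cos\theta_{12} \sim \theta_{12}^2$ on $[0,\pi]$, this becomes
\[
(|\xi_0| - |\xi_1| + s|\xi_2|)\,(|\xi_0| + |\xi_1| - s|\xi_2|) \sim s\,|\xi_1||\xi_2|\theta_{12}^2.
\]

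Third, I would carry out a short sign analysis. In the case $\sigma_1 = \sigma_2$ (so $s=1$), the signed combination from the first step is, up to sign, one of the two factors on the left; the reverse triangle inequality $\bigl||\xi_1| - |\xi_2|\bigr| \le |\xi_0|$ bounds the \emph{other} factor by $2|\xi_0|$, and hence the factor of interest is $\gtrsim |\xi_1||\xi_2|\theta_{12}^2/|\xi_0|$. In the case $\sigma_1 \neq \sigma_2$ with $\sigma_0 = \sigma_2$, the signed combination collapses to $\pm(|\xi_0| + |\xi_1| + |\xi_2|)$, which is $\gtrsim \max(|\xi_1|,|\xi_2|)$ and easily dominates every right-hand side we need.

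The main obstacle, and the source of the exceptional alternative in the statement, lies in the remaining sub-case $\sigma_1 \neq \sigma_2$ with $\sigma_0 = \sigma_1$. Here the signed combination equals $\pm(|\xi_1|+|\xi_2|-|\xi_0|)$, and the factorization gives
\[
|\xi_1| + |\xi_2| - |\xi_0| \sim \frac{|\xi_1||\xi_2|\theta_{12}^2}{|\xi_0|+|\xi_1|+|\xi_2|}.
\]
If $|\xi_0| \gtrsim \max(|\xi_1|,|\xi_2|)$, the denominator is $\sim |\xi_0|$ and the good bound $\max_j|h_j| \gtrsim |\xi_1||\xi_2|\theta_{12}^2/|\xi_0|$ follows. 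Otherwise $|\xi_0| \ll \max(|\xi_1|,|\xi_2|)$, which combined with $\xi_0 = \xi_1 - \xi_2$ and the reverse triangle inequality forces $|\xi_1| \sim |\xi_2|$ and $\angle(\xi_1,\xi_2) \ll 1$; since $\sigma_1 \neq \sigma_2$, we get $\theta_{12} = \pi - \angle(\xi_1,\xi_2) \sim 1$ and $\max_j|h_j| \gtrsim |\xi_1| + |\xi_2| - |\xi_0| \sim \min(|\xi_1|,|\xi_2|)$, which is exactly the exceptional conclusion. Finally, the universal bound $\max_j|h_j| \gtrsim \min(|\xi_1|,|\xi_2|)\theta_{12}^2$ is read off from the dichotomy, since $|\xi_0| \lesssim \max(|\xi_1|,|\xi_2|)$ always gives $|\xi_1||\xi_2|/|\xi_0| \gtrsim \min(|\xi_1|,|\xi_2|)$.
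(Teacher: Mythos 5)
Your proof is correct: the identity $h_0-h_1+h_2=\pm_0|\xi_0|\mp_1|\xi_1|\pm_2|\xi_2|$, the law-of-cosines factorization $(|\xi_0|-|\xi_1|+s|\xi_2|)(|\xi_0|+|\xi_1|-s|\xi_2|)=2s|\xi_1||\xi_2|(1-\cos\theta_{12})$, and the sign analysis together yield both the universal bound and the stated dichotomy, with the exceptional case $|\xi_0|\ll|\xi_1|\sim|\xi_2|$, $\pm_1\neq\pm_2$ correctly isolated. The paper itself does not prove this lemma but imports it verbatim from Selberg \cite{selb}, and your argument is essentially the standard one given there.
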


\subsection{Null structure}


While proving the local well-posedness of \eqref{csh}, we must encounter multilinear estimates such as bilinear and trilinear estimates. Since we make use of duality argument and Cauchy-Schwarz inequality, we essentially treat only bilinear forms of wave type. Then the most serious case resulting in resonance interaction occurs when two input-waves are collinear. However, once this bilinear form possesses cancellation property so called null structure, we can expect better estimates \cite{klama}. 

Before discussing the null structure, we note that the spatial part of vector potential $\mathbf A = (A_1, A_2)$ can be split into divergence-free and curl-free parts:
$$
\mathbf A = \mathbf A^{\rm df} + \mathbf A^{\rm cf},
$$
where
\begin{align*}
A^{\rm df}_j &= (-\Delta)^{-1}\epsilon_{0jk}\partial^k(\epsilon^{0lm}\partial_lA_m) \\
A^{\rm cf}_j &= -(-\Delta)^{-1}\partial_j\partial^kA_k.
\end{align*}
Also we define the Riesz transform given by
$$
R_j = D^{-1}\partial_j = \frac{\partial_j}{D}.
$$
Now we introduce the standard null forms:
\begin{align*}
Q_0(u,v) &= \partial_\alpha u\partial^\alpha v \\
Q_{\alpha\beta}(u,v) &= \partial_\alpha u\partial_\beta v-\partial_\beta u\partial_\alpha v.
\end{align*}
Then we define a commutator version of null forms by
\begin{align*}
Q_0[u,v] &= [\partial_\alpha u,\partial^\alpha v] \\
Q_{\alpha\beta}[u,v] &= [\partial_\alpha u,\partial_\beta v]-[\partial_\beta u,\partial_\alpha v].
\end{align*}
Here we give some remark on commutator version of null forms. For $\mathfrak{su}(n)$-valued functions $u$ and $v$, we write $u=u_aT^a$ and $v=v_bT^b$, where $a,b=1,2,\cdots,n^2-1$ and $u_a,v_b$ are smooth scalar functions. Then there holds
\begin{align*}
Q_{\alpha\beta}[u,v] &= [\partial_\alpha u,\partial_\beta v] - [\partial_\beta u,\partial_\alpha v] \\
&= [ \partial_\alpha u_aT^a,\partial_\beta v_bT^b ] - [\partial_\beta u_aT^a,\partial_\alpha v_bT^b] \\
&= \partial_\alpha u_a\partial_\beta v_b[T^a,T^b] - \partial_\beta u_a\partial_\alpha v_b[T^a,T^b] \\
&= (\partial_\alpha u_a\partial_\beta v_b-\partial_\beta u_a\partial_\alpha v_b)[T^a,T^b]\\
&= Q_{\alpha\beta}(u_a,v_b)[T^a,T^b]\\
&= Q_{\alpha\beta}(u_a,v_b)if^{ab}_{\phantom{ab}c}T^c.
\end{align*}
For the last equality see Appendix below. Then for a function space $\mathcal X(\mathfrak{su}(n))$ defined by the functions with value in $\mathfrak{su}(n)$, we observe that
$$
\|Q_{\alpha\beta}[u,v]\|_{\mathcal X(\mathfrak{su}(n))} = \sum_{c}\|Q_{\alpha\beta}(u_a,v_b)\|_{\mathcal X(\mathbb C)}|f^{ab}_{\phantom{ab}c}|.
$$
Hence we conclude that the $\mathcal X(\mathfrak{su}(n))$ norm of commutator version of null forms is reduced to the $\mathcal X(\mathbb{C})$ norm of null forms of scalar functions.

The following lemma is on null structure hidden in \eqref{csh}.
\begin{lem}\label{null-csh}
In the Lorenz gauge, we have the following identity:
$$
[A^\mu,\partial_\mu\phi] = \frac12\epsilon^{0jk}\epsilon_{0lm}Q_{jk}[D^{-1}R^lA^m,\phi]-Q_{j0}[R^j(D^{-1}A_0),\phi].
$$
\end{lem}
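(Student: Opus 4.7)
My plan is to use the Helmholtz decomposition $\mathbf A=\mathbf A^{\rm df}+\mathbf A^{\rm cf}$ already recalled in the text and to split
$$
[A^\mu,\partial_\mu\phi] = [A_0,\partial_t\phi] - [A^{\rm df}_j,\partial_j\phi] - [A^{\rm cf}_j,\partial_j\phi]
$$
(with the spatial sum understood in Einstein summation). I will then verify that the divergence-free piece alone reproduces the first term on the right-hand side (the $Q_{jk}$-type null form), while the curl-free piece combined with the time component reproduces the second (the $Q_{j0}$-type null form). Each matching is a purely algebraic, Fourier-multiplier manipulation whose substance is one antisymmetrization.

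For the divergence-free contribution, I will exploit the fact that in two dimensions any divergence-free vector field takes the form $A^{\rm df}_j=\epsilon_{0jk}\partial^k\psi$, which in our case reads $\psi = D^{-2}\epsilon^{0lm}\partial_l A_m = \epsilon_{0lm}D^{-1}R^l A^m$; this is just the definition of $A^{\rm df}_j$ rewritten. Substituting into $[A^{\rm df}_j,\partial_j\phi]$ and using the antisymmetry of $\epsilon_{0jk}$ in $j,k$ to symmetrize, I get $\frac12\epsilon^{0jk}Q_{jk}[\psi,\phi]$; unfolding $\psi$ then yields the double-$\epsilon$ contraction in the first term of the lemma.

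For the remaining piece, I will use two elementary Fourier-multiplier identities. The first is $D=R^j\partial_j$ (summed in $j$), which lets me rewrite $A_0 = \partial_j(R^j D^{-1}A_0)$. The second is the Lorenz gauge $\partial^\mu A_\mu=0$, which gives $\partial_t A_0 = \partial^j A_j$ and hence $A^{\rm cf}_j = \partial_t(R_j D^{-1}A_0)$ after an application of $D^{-2}\partial_j$. Substituting both identities in $[A_0,\partial_t\phi] - [A^{\rm cf}_j,\partial_j\phi]$, the expression collapses into
$$
[\partial_j(R^j D^{-1}A_0),\partial_t\phi] - [\partial_t(R^j D^{-1}A_0),\partial_j\phi] = Q_{j0}[R^j(D^{-1}A_0),\phi],
$$
which reproduces the second term of the lemma (up to the global minus sign).

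The main obstacle will be bookkeeping rather than anything conceptual. Three sign conventions interact here: the Minkowski signature $(+,-,-)$ relating $\partial^k$ to $\partial_k$ (and hence $R^j$ to $R_j$), the raising of the $0$-index on the Levi-Civita symbol, and the overall sign in the curl-free component $A^{\rm cf}_j=\pm R_j R_k A_k$. Once these are pinned down in a way consistent with the formulas for $A^{\rm df}_j$ and $A^{\rm cf}_j$ given immediately before the lemma, the claimed identity follows from the two algebraic steps above with no further analytic input.
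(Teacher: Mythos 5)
Your proposal is correct and follows essentially the same route as the paper: decompose $\mathbf A$ into its divergence-free and curl-free parts, antisymmetrize the divergence-free contribution to produce the $\epsilon^{0jk}\epsilon_{0lm}Q_{jk}$ term, and use the Lorenz gauge relation $\partial_tA_0=\partial^jA_j$ together with $A_0=-D^{-2}\partial_j\partial^jA_0$ to combine $[A_0,\partial_t\phi]$ with the curl-free piece into the $Q_{j0}$ null form. The only remaining work is the sign bookkeeping you already flag, which is exactly where the paper's proof also spends its effort.
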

\begin{proof}
First, we note that
$$
A^\mu\partial_\mu\phi = A_0\partial_t\phi+\mathbf A^{\rm cf}\cdot\nabla\phi+\mathbf A^{\rm df}\cdot\nabla\phi.
$$	
By Lorenz gauge condition: $\partial^kA_k=\partial_tA_0$, we get
\begin{align*}
\mathbf A^{\rm cf}\cdot\nabla\phi &= -(-\Delta)^{-1}\partial_j\partial^kA_k\partial_j\phi \\
&= D^{-2}\partial^j(\partial_tA_0)\partial_j\phi\\
&= \partial_tR^j(D^{-1}A_0)\partial_j\phi.
\end{align*}
Also we have
\begin{align*}
A_0\partial_t\phi &= -D^{-2}\partial_j\partial^jA_0\partial_t\phi\\
&= -\partial_jR^j(D^{-1}A_0)\partial_t\phi,
\end{align*}
and hence we have
$$
A_0\partial_t+\mathbf A^{\rm cf}\cdot\nabla\phi = -Q_{j0}(R^jD^{-1}A_0,\phi).
$$
Next, we see that
\begin{align*}
\mathbf A^{\rm df}\cdot\nabla\phi &= (-\Delta)^{-1}\epsilon^{0jk}\epsilon_{0lm}\partial_k\partial^lA^m\partial_j\phi \\
&= -\epsilon^{0jk}\epsilon_{0lm}\partial_k(D^{-1}R^lA_m)\partial_j\phi \\
&= \frac12\epsilon^{0jk}\epsilon_{0lm}Q_{jk}(D^{-1}R^lA^m,\phi).	
\end{align*}
We can treat $\partial_\mu A^\mu$ similarly and hence completes the proof.
\end{proof}
We have the following corollary by Lemma \ref{null-csh}.
\begin{cor}\label{null-csh2}
In the Lorenz gauge, we have the following identity.
$$
[\partial^\nu A_\mu,A_\nu] = -\frac12\epsilon^{0jk}\epsilon_{0lm}Q_{jk}[D^{-1}R^lA^m,A_\mu]+Q_{j0}[R^j(D^{-1}A_0),A_\mu].
$$
\end{cor}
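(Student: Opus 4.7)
The plan is to obtain Corollary \ref{null-csh2} as an immediate consequence of Lemma \ref{null-csh}. First I would rewrite the left-hand side by antisymmetry of the commutator bracket together with the observation that the Minkowski contraction $A_\nu\partial^\nu$ agrees with $A^\nu\partial_\nu$ when acting on a matrix-valued field (both sums give $A_0\partial_t - A_j\partial_j$ applied to that field). This yields
$$[\partial^\nu A_\mu, A_\nu] = -[A_\nu, \partial^\nu A_\mu] = -[A^\nu, \partial_\nu A_\mu].$$

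Next I would apply Lemma \ref{null-csh} with its summation index renamed from $\mu$ to $\nu$ and with the matter field $\phi$ replaced by the $\mathfrak{su}(n)$-valued field $A_\mu$. This substitution is legitimate: the derivation of Lemma \ref{null-csh} relies only on the Lorenz gauge $\partial^\mu A_\mu=0$ and the Helmholtz-type split $\mathbf A = \mathbf A^{\rm df}+\mathbf A^{\rm cf}$, and it places no structural demand on the second slot — the commutator identity holds with any $\mathfrak{su}(n)$-valued tensor there. This gives
$$[A^\nu, \partial_\nu A_\mu] = \frac12\epsilon^{0jk}\epsilon_{0lm}Q_{jk}[D^{-1}R^lA^m, A_\mu] - Q_{j0}[R^j(D^{-1}A_0), A_\mu].$$

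Substituting this into the antisymmetry identity and distributing the overall minus sign reproduces the claimed formula. There is really no obstacle; the only point requiring care is the mild index bookkeeping that the Einstein contraction is independent of whether one raises or lowers the paired $\nu$, so no extra signs from $\eta=\mathrm{diag}(+,-,-)$ leak into the final expression.
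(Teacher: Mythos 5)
Your proposal is correct and matches the paper's route: the paper obtains the corollary directly from Lemma \ref{null-csh} (whose proof only uses the Lorenz gauge on the first slot and treats the second slot as a generic $\mathfrak{su}(n)$-valued field), and your antisymmetry step $[\partial^\nu A_\mu, A_\nu] = -[A^\nu, \partial_\nu A_\mu]$ together with the metric-contraction remark is exactly the bookkeeping needed to make that deduction explicit.
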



\section{Proof of Local Well-Posedness}

\subsection{Picard's iterates}

To prove (LWP) of \eqref{csh}, we construct Picard's iterates and follow the contraction principle. Indeed, by \eqref{csh-decom}, we obtain the following integral equation for $\phi_{\pm}$ and $A_{\mu,\pm}$ respectively:
\begin{align}
&\phi_{\pm}(t) = \phi^{\rm hom}_{\pm}(t) \pm i\int_0^t\frac{e^{\mp i(t-t')D}}{2iD}\mathcal M(\phi,A)(t')\,dt'\label{phi-int} \\
&A_{\mu,\pm} = A^{\rm hom}_{\mu,\pm}(t) \pm i\int_0^t\frac{e^{\mp i(t-t')D}}{2iD}\mathcal N_\mu(\phi,A)(t')\,dt'\label{a-int}
\end{align}
where
$$
\psi^{\rm hom}_{\pm}(t) = \frac12e^{\mp itD}\left(\psi(0,x)\mp\frac{1}{iD}\partial_0\psi(0,x)\right).
$$
Here, $\mathcal M$ and $\mathcal N_\mu$ are the (LHS) of the first two equations of \eqref{csh}.
To prove that our Picard's iteration converges, it suffices to show the following estimates:


\begin{align}
&\|\phi^{\rm hom}_{\pm}\|_{X^{s+\frac12,b}_\pm(S_T)}\lesssim \|f\|_{H^{s+\frac12}}+\|g\|_{H^{s-\frac12}},\label{hom1} \\
&\|A^{\rm hom}_{\mu,\pm}\|_{X^{s,b}_\pm(S_T)} \lesssim \sum_{\nu=0}^2\|a_\nu\|_{H^{s}}+\|a_0\|_{H^{s}}\sum_{\nu=0}^2\|a_\nu\|_{H^{s}}+\|f\|_{H^{s+\frac12}}^2,\label{hom2} \\
&\|\mathcal M(\phi,A)\|_{X^{s-\frac12,b-1}_\pm(S_T)} \lesssim \mathfrak S(1+\mathfrak S+\mathfrak S^2+\mathfrak S^4),\label{nonl1} \\
&\|\mathcal N_\mu(\phi,A)\|_{X^{s-1,b-1}_\pm(S_T)} \lesssim \mathfrak S(1+\mathfrak S+\mathfrak S^2),\label{nonl2}
\end{align}
where $\mathfrak S=\sum_\pm(\|A_\pm\|_{X^{s,b}_\pm(S_T)}+\|\phi_\pm\|_{X^{s+\frac12,b}_\pm(S_T)})$.

\subsection{Estimates of $\phi^{\rm hom}_\pm$ and $A^{\rm hom}_{\mu,\pm}$}
Since $f\in H^{s+\frac12}$ and $g\in H^{s-\frac12}$, we see that
$$
\|\phi^{\rm hom}_\pm\|_{X^{s+\frac12,b}_\pm} \lesssim \left\|f\mp\frac{1}{iD}g\right\|_{H^{s+\frac12}}\le \|f\|_{H^{s+\frac12}}+\left\|\frac{1}{iD}g\right\|_{H^{s+\frac12}} \lesssim \|f\|_{H^{s+\frac12}}+\|g\|_{H^{s-\frac12}}.
$$
On the other hand, $A^{\rm hom}_{0}$ and $A^{\rm hom}_{j}$ are given by
\begin{align*}
A_0^{\rm hom} &= \sum_\pm\frac12e^{\mp itD}\left(a_0\pm\frac{\partial^j}{iD}a_j\right), \\
A_j^{\rm hom} &= \sum_\pm\frac12e^{\mp itD}\left(a_j\mp\frac{1}{iD}\left(\partial_ja_0-[a_0,a_j]+\epsilon_{0jk}([f^\dagger,\partial^kf]-[(\partial^kf)^\dagger,f])\right)\right).
\end{align*}
Since Riesz transform is a bounded operator in $L^2$,
$$
\|A_0^{\rm hom}\|_{X^{s,b}_\pm} \lesssim \|a_0\|_{H^{s}}+\left\|\frac{\partial^j}{iD}a_j\right\|_{H^{s}} \lesssim \|a_0\|_{H^{s}}+\sum_{j=1}^2\|a_j\|_{H^{s}}.
$$
To treat $A^{\rm hom}_j$ we recall Bernstein's inequality,
\begin{align}\label{Bernstein-ineq}
\|P_{|\xi|\sim N}f\|_{L^p(\mathbb R^d)} &\lesssim N^{d(\frac1q-\frac1p)}\|P_{|\xi|\sim N}f\|_{L^q(\mathbb R^d)},
\end{align}
for $q<p\le\infty$.
Then we have
\begin{align*}
\left\|\frac{1}{iD}(f\partial^kf)\right\|_{H^{s}}^2 &\lesssim \|f\partial^kf\|_{H^{s-1}}^2 = \sum_{N:{\rm dyadic}}(N^{s-1}\|P_N(f\partial^kf)\|)^2 \\
&\le \sum_{N}(N^{s-1}\|P_Nf\|_{L^4}\|P_N\partial^kf\|_{L^4})^2 \\
&\lesssim \sum_{N}(N^{s+1}\|P_Nf\|\|P_Nf\|)^2 \\
&\lesssim \sum_N(N^{s+\frac12}\|P_Nf\|)^2\sum_N(N^{s+\frac12}\|P_Nf\|)^2 \\
&\lesssim \|f\|_{H^{s+\frac12}}^4,
\end{align*}
and similarly,
$$
\left\|\frac{1}{iD}(a_0a_j)\right\|_{H^{s}} \lesssim \|a_0a_j\|_{H^{s-1}} \lesssim \|a_0\|_{H^{s}}\|a_j\|_{H^{s}}.
$$
Therefore,
$$
\|A_j^{\rm hom}\|_{X^{s,b}_\pm} \lesssim \|a_0\|_{H^{s}}+\|a_j\|_{H^{s}}+\|a_0\|_{H^{s}}\|a_j\|_{H^{s}}+\|f\|^2_{H^{s+\frac12}}.
$$
This proves \eqref{hom1} and \eqref{hom2}.

\subsection{Reduction step}
Now we reduce the nonlinear estimates \eqref{nonl1} and \eqref{nonl2} of $\mathfrak{su}(n)$-valued functions to those of scalar functions. We have already observed in Section 3 that the matrix structure in null forms plays no crucial role in the estimates. Also, we claim that estimates of cubic and quintic terms of $A^\mu,\phi$ are reduced to the nonlinear estimates of scalar functions. For example, let us consider the cubic terms $[A^\mu_{\pm_1},[A_{\mu,\pm_2},\phi_{\pm_3}]]$. We write $A^\mu_{\pm_1}=A^\mu_{\pm_1,a}T^a$, $A_{\mu,\pm_2}=A_{\mu,\pm_2,b}T^b$, and $\phi_{\pm_3}=\phi_{\pm_3,c}T^c$. Then for a Lebesgue or Sobolev space $\mathcal X(\mathfrak{su}(n))$ of $\mathfrak{su}(n)$-valued functions, the norm of $[A^\mu_{\pm_1},[A_{\mu,\pm_2},\phi_{\pm_3}]]$ is given by
$$
\|[A^\mu_{\pm_1},[A_{\mu,\pm_2},\phi_{\pm_3}]]\|_{\mathcal X(\mathfrak{su}(n))} = \sum_{e}\|A^\mu_{\pm_1,a}A_{\mu,\pm_2,b}\phi_{\pm_3,c}\|_{\mathcal X(\mathbb C)}|f_{\phantom{bd}d}^{bc}||f_{\phantom{ad}e}^{ad}|,
$$
where $\|\cdot\|_{\mathcal X (\mathbb C)}$ is the norm of space $\mathcal X$ of complex-valued functions, and thus the norm of cubic terms of $\mathfrak{su}(n)$-valued fields is reduced to the norm of cubic terms of scalar fields.

In Appendix, we shall see that $a$-th component of $\mathcal V(\phi,\phi^\dagger)$ is given by
$$
\dfrac{\partial V(\phi,\phi^\dagger)}{\partial\phi_a^*} = 2\sum_{e}(f^{ab}_{\phantom{ab}d}f^{dc}_{\phantom{dc}e}\phi_b^*\phi_c^* + v^2\delta^{ea}) (f^{a'b'}_{\phantom{a'b'}d'}f^{d'c'}_{\phantom{d'c'}e}\phi_{a'}\phi_{b'}\phi_{c'} + v^2\phi_{e}).
$$
Then the $\mathcal X(\mathfrak{su}(n))$ norm of the quintic term in $\mathcal V$ is bounded by
$$
2\sum_{a, e}\|\phi_b^*\phi_c^*\phi_{a'}\phi_{b'}\phi_{c'}\|_{\mathcal X(\mathbb C)}|f^{ab}_{\phantom{ab}d}f^{dc}_{\phantom{dc}e}f^{a'b'}_{\phantom{a'b'}d'}f^{d'c'}_{\phantom{d'c'}e}|,
$$
and hence the norm of quintic terms of $\phi,\phi^\dagger$ is reduced to the $\mathcal X(\mathbb C)$ norm of quintic terms of scalar fields. Therefore, from now on, we consider the $\mathfrak{su}(n)$-valued functions $A^\mu,\phi$ as $\mathbb C$-valued functions.
Here we assume $s>\frac14$, $b>\frac12$. Since we are concerned with low regularity solution, we may assume $0<s-\frac14\ll1$ and $0<b-\frac12\ll 1$. Hence we write
$$
s=\frac14+\delta,\quad b=\frac12+\epsilon,
$$
where $0<100\epsilon<\delta\ll1$.
In the following three sections, we will focus on the proof of the above nonlinear estimates \eqref{nonl1} and \eqref{nonl2}.
\section{Bilinear estimates}

This section is devoted to the proof of the bilinear estimates appearing in Section 4. Since the Riesz transforms $R_i$ are bounded in the spaces under our consideration, these bilinear estimates can be reduced to the following:
\begin{align}
& \|Q_{jk}(D^{-1}A_{\pm_1},\phi_{\pm_2})\|_{X^{s-\frac12,b-1}_\pm(S_T)} \lesssim \|A_{\pm_1}\|_{X^{s,b}_{\pm_1}}\|\phi_{\pm_2}\|_{X^{s+\frac12,b}_{\pm_2}}\label{bi-aphi-jk}, \\
& \|Q_{j0}(D^{-1}A_{\pm_1},\phi_{\pm_2})\|_{X^{s-\frac12,b-1}_\pm(S_T)} \lesssim \|A_{\pm_1}\|_{X^{s,b}_{\pm_1}}\|\phi_{\pm_2}\|_{X^{s+\frac12,b}_{\pm_2}}\label{bi-aphi-j0}, \\
& \|Q_{jk}(D^{-1}A_{\pm_1},A_{\pm_2})\|_{X^{s-1,b-1}_\pm(S_T)} \lesssim \|A_{\pm_1}\|_{X^{s,b}_{\pm_1}}\|A_{\pm_2}\|_{X^{s,b}_{\pm_2}}\label{bi-aa-jk}, \\
& \|Q_{j0}(D^{-1}A_{\pm_1},A_{\pm_2})\|_{X^{s-1,b-1}_\pm(S_T)} \lesssim \|A_{\pm_1}\|_{X^{s,b}_{\pm_1}}\|A_{\pm_2}\|_{X^{s,b}_{\pm_2}}\label{bi-aa-j0}, \\
& \|Q_{jk}(\overline{\phi_{\pm_1}}, \phi_{\pm_2})\|_{X^{s-1,b-1}_\pm(S_T)} \lesssim \|\phi_{\pm_1}\|_{X^{s+\frac12,b}_{\pm_1}}\|\phi_{\pm_2}\|_{X^{s+\frac12,b}_{\pm_2}}\label{bi-phiphi-jk}, \\
& \|Q_{j0}(\overline{\phi_{\pm_1}}, \phi_{\pm_2})\|_{X^{s-1,b-1}_\pm(S_T)} \lesssim \|\phi_{\pm_1}\|_{X^{s+\frac12,b}_{\pm_1}}\|\phi_{\pm_2}\|_{X^{s+\frac12,b}_{\pm_2}}\label{bi-phiphi-j0}.
\end{align}
To prove the above bilinear estimates via null forms for functions $u$ and $v$, we recall the substitution:
\begin{align*}
& u = u_++u_-, \quad \partial_t u = iD(u_+-u_-),\\
& v = v_++v_-, \quad \partial_t v = iD(v_+-v_-).
\end{align*}
Then we have
\begin{align*}
& Q_{j0}(u,v) = \sum_{\pm_1,\pm_2}(\pm_11)(\pm_21)\left((\pm_1\partial_ju_{\pm_1})(\pm_2iDv_{\pm_2})-(\pm_1iDu_{\pm_1})(\pm_2\partial_jv_{\pm_2})\right),\\
& Q_{jk}(u,v) = \sum_{\pm_1,\pm_2}(\pm_11)(\pm_21)\left((\pm_1\partial_ju_{\pm_1})(\pm_2\partial_kv_{\pm_2})-(\pm_1\partial_ku_{\pm_1})(\pm_2\partial_jv_{\pm_2})\right),
\end{align*}
and their symbols are given by
\begin{align*}
& q_{j0}(\xi_1,\xi_2) = -\xi_{1,j}|\xi_2|+|\xi_1|\xi_{2,j},\\
& q_{jk}(\xi_1,\xi_2) = -\xi_{1,j}\xi_{2,k}+\xi_{1,k}\xi_{2,j}.
\end{align*}
For these symbols, we have the following estimates.
\begin{lem}\label{null-ineq}
For $\xi_1,\xi_2\in\mathbb R^2$ with $\xi_1,\xi_2\neq0$,
\begin{align*}
& |q_{j0}(\xi_1,\xi_2)|,|q_{jk}(\xi_1,\xi_2)| \lesssim |\xi_1||\xi_2||\angle(\xi_1,\xi_2)|.
\end{align*}
\end{lem}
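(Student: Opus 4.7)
The plan is to normalize both factors to unit vectors and reduce the two estimates to elementary trigonometric identities in $\mathbb{R}^2$. Write $\xi_i = |\xi_i|\omega_i$ with $\omega_i \in \mathbb{S}^1$, and set $\theta = \angle(\xi_1,\xi_2) = \angle(\omega_1,\omega_2)$.

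For the mixed symbol $q_{j0}$, I would factor out $|\xi_1||\xi_2|$ directly:
\begin{equation*}
q_{j0}(\xi_1,\xi_2) = -\xi_{1,j}|\xi_2| + |\xi_1|\xi_{2,j} = |\xi_1||\xi_2|\bigl(\omega_{2,j}-\omega_{1,j}\bigr).
\end{equation*}
Then I would estimate $|\omega_{2,j}-\omega_{1,j}| \le |\omega_1 - \omega_2| = 2\sin(\theta/2) \le |\theta|$, which yields the desired bound $|q_{j0}(\xi_1,\xi_2)| \lesssim |\xi_1||\xi_2||\angle(\xi_1,\xi_2)|$.

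For the purely spatial symbol $q_{jk}$, first I would observe that it is antisymmetric in $(j,k)$, so the estimate is trivial when $j=k$; for $j\neq k$ in $\{1,2\}$ it reduces up to sign to the 2D wedge product. Factoring out $|\xi_1||\xi_2|$ as before, I would use the classical identity
\begin{equation*}
\omega_{1,1}\omega_{2,2}-\omega_{1,2}\omega_{2,1} = \sin\theta,
\end{equation*}
to obtain $|q_{jk}(\xi_1,\xi_2)| = |\xi_1||\xi_2||\sin\theta| \le |\xi_1||\xi_2||\theta|$, which is exactly the claim.

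I do not expect any real obstacle here: the lemma is essentially the statement that both $q_{j0}$ and $q_{jk}$ are classical null symbols vanishing when $\omega_1 = \omega_2$ (resp.\ when $\omega_1,\omega_2$ are collinear), and the quantitative control follows from $|\omega_1-\omega_2|,|\sin\theta|\le|\theta|$. The only mildly delicate point — and it is not really an obstacle — is keeping track of the sign conventions so that the $q_{j0}$ symbol collapses to $|\xi_1||\xi_2|(\omega_{2,j}-\omega_{1,j})$ rather than a sum in which the leading $|\xi_1||\xi_2|$-terms fail to cancel; this cancellation is the whole source of the null structure.
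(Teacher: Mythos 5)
Your proof is correct. The paper does not actually prove this lemma — it simply cites Lemma 5 of Selberg--Tesfahun — and your computation is exactly the standard elementary argument behind that reference: after factoring out $|\xi_1||\xi_2|$, the symbol $q_{j0}$ collapses to the chord difference $\omega_{2,j}-\omega_{1,j}$, bounded by $|\omega_1-\omega_2|=2\sin(\theta/2)\le|\theta|$, while $q_{jk}$ is (up to sign, and trivially zero for $j=k$) the planar wedge product $|\xi_1||\xi_2|\sin\theta$, bounded by $|\xi_1||\xi_2||\theta|$. The cancellation of the top-order terms that you flag as the "mildly delicate point" is indeed the whole content of the null structure, and your sign bookkeeping is right.
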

\begin{proof}
See Lemma 5. of \cite{selbtes}. 
\end{proof}
By Lemma \ref{null-ineq}, it suffices to prove \eqref{bi-aphi-jk}, \eqref{bi-aa-jk}, and \eqref{bi-phiphi-jk}. Furthermore, the proof of \eqref{bi-aphi-jk} and \eqref{bi-aa-jk} is essentially same. Thus we focus on the proof of \eqref{bi-aphi-jk} and \eqref{bi-phiphi-jk}.

\subsection{Proof of \eqref{bi-aphi-jk}}

We write
\begin{align*}
\|Q_{jk}(D^{-1}A_{\pm_1},\phi_{\pm_2})\|_{X^{s-\frac12,b-1}_{\pm_0}}^2 &= \sum_{N_0,L_0}\left(N_0^{-\frac14+\delta}L_0^{-\frac12+\epsilon}\|P_{K_{N_0,L_0}^{\pm_0}}Q_{jk}(D^{-1}A_{\pm_1},\phi_{\pm_2})\|\right)^2 \\
&\lesssim \sum_{N_0,L_0}\left(N_0^{-\frac14+\delta}L_0^{-\frac12+\epsilon}\sum_{N_1,N_2}\sum_{L_1,L_2}\|P_{K_{N_0,L_0}^{\pm_0}}Q_{jk}(D^{-1}A^{\pm_1}_{N_1,L_1}\phi^{\pm_2}_{N_2,L_2}\|\right)^2 \\
&\lesssim \sum_{N_0,L_0}\left(N_0^{-\frac14+\delta}L_0^{-\frac12+\epsilon}\sum_{N_1,N_2}\sum_{L_1,L_2}N_2\theta_{12}\|P_{K_{N_0,L_0}^{\pm_0}}(A^{\pm_1}_{N_1,L_1}\phi^{\pm_2}_{N_2,L_2})\|\right)^2 \\
&\lesssim \sum_{N_0,L_0}\left(N_0^{-\frac14+\delta}L_0^{-\frac12+\epsilon}\sum_{N_1,N_2}\sum_{L_1,L_2}N_2\theta_{12}C_{N,L}^{012}\|A^{\pm_1}_{N_1,L_1}\|\|\phi^{\pm_2}_{N_2,L_2}\|\right)^2,
\end{align*}
where we write $A^{\pm_1}_{N_1,L_1}=P_{K_{N_1,L_1}^{\pm_1}}A_{\pm_1}$, $\phi^{\pm_2}_{N_2,L_2}=P_{K_{N_2,L_2}^{\pm_2}}\phi_{\pm_2}$ for brevity and $C_{N,L}^{012}$ is \eqref{bi-selb-1} or \eqref{bi-selb-2}.

Then it suffices to show that
\begin{align*}
\mathbf J^1 &:= \sum_{N_1,N_2}\sum_{L_1,L_2}N_0^{-\frac14+\delta}L_0^{-\frac12+\epsilon}N_2\theta_{12}C_{N,L}^{012}\|A^{\pm_1}_{N_1,L_1}\|\|\phi^{\pm_2}_{N_2,L_2}\| \\
&\lesssim \sum_{N_1,N_2}\sum_{L_1,L_2}N_0^\delta N_1^\frac14 N_2^\frac34 L_0^\epsilon(L_1L_2)^\frac12\|A^{\pm_1}_{N_1,L_1}\|\|\phi^{\pm_2}_{N_2,L_2}\|.
\end{align*}
To see this, we only consider $L_1\le L_2\ll L_0$. The general square summation by $L_1,L_2$ gives
\begin{align*}
\|Q_{jk}(D^{-1}A_{\pm_1},\phi_{\pm_2})\|_{X^{s-\frac12,b-1}_{\pm_0}}^2 &\lesssim \sum_{N_0,L_0}(N_0^\delta L_0^\epsilon\sum_{N_1,N_2} N_1^\frac14 N_2^\frac34 \|A^{\pm_1}_{N_1}\|_{X^{0,b}_{\pm_1}}\|\phi^{\pm_2}_{N_2}\|_{X^{0,b}_{\pm_2}})^2 \\
&\lesssim \sum_{N_0}(N_0^\delta \sum_{N_1,N_2} (N_{\min}^{12})^\epsilon N_1^\frac14 N_2^\frac34 \|A^{\pm_1}_{N_1}\|_{X^{0,b}_{\pm_1}}\|\phi^{\pm_2}_{N_2}\|_{X^{0,b}_{\pm_2}})^2.
\end{align*}
If $N_0\ll N_1\sim N_2$, then
\begin{align*}
\|Q_{jk}(D^{-1}A_{\pm_1},\phi_{\pm_2})\|_{X^{s-\frac12,b-1}_{\pm_0}}^2 &\lesssim \sum_{N_0\ge1}(N_0^\delta N_0^{\epsilon-2\delta}\sum_{N_1}N_1^\frac14 \|A^{\pm_1}_{N_1}\|_{X^{0,b}_{\pm_1}})^2\|\phi_{\pm_2}\|_{X^{s+\frac12,b}_{\pm_2}}^2 \\
&\lesssim \sum_{N_0\ge1}N_0^{-2(\delta-\epsilon)}\|A_{\pm_1}\|_{X^{s,b}_{\pm_1}}^2\|\phi_{\pm_2}\|_{X^{s+\frac12,b}_{\pm_2}}^2.
\end{align*}
If $N_1\ll N_0\sim N_2$, then
\begin{align*}
\|Q_{jk}(D^{-1}A_{\pm_1},\phi_{\pm_2})\|_{X^{s-\frac12,b-1}_{\pm_0}}^2 &\lesssim \sum_{N_0}(N_0^\delta N_0^\epsilon \sum_{N_2}N_2^\frac34\|\phi^{\pm_2}_{N_2}\|_{X^{0,b}_{\pm_2}} )^2 \|A_{\pm_1}\|_{X^{s,b}_{\pm_1}}^2 \\
&\lesssim \sum_{N_0\ge1}N_0^{-2(\delta-\epsilon)}\|A_{\pm_1}\|_{X^{s,b}_{\pm_1}}^2\|\phi_{\pm_2}\|_{X^{s+\frac12,b}_{\pm_2}}^2.
\end{align*}
The case $N_2\ll N_0\sim N_1$ is similar. \\
Now we focus on $\mathbf J^1$. There are two important relation on frequency:
\begin{align}
&N_0\ll N_1\sim N_2,\quad \pm_1\neq\pm_2,\quad \theta_{12}\sim 1,\\
& N_{\min}^{12}\ll N_0\sim N_{\max}^{12},\quad \theta_{12}\ll1.
\end{align}
Since we have $\xi_0=\xi_1+\xi_2$, high-high-low interaction implies $\theta_{12}\sim 1$ and hence the null structure plays no crucial role. In this case we have high modulation with low frequency. Thus instead of \eqref{bi-selb-1} and \eqref{bi-selb-2}, we use \eqref{bi-selb-3}, a trivial volume estimate. It gives better estimate. In fact,
\begin{align*}
\mathbf J^1 &\lesssim \sum_{N_1,N_2}\sum_{L_1,L_2}N_0^{-\frac14+\delta}L_0^{-\frac12+\epsilon}N_2N_0(L_{\min}^{012})^\frac12\|A^{\pm_1}_{N_1,L_1}\|\|\phi^{\pm_2}_{N_2,L_2}\| \\
&\lesssim \sum_{N_1,N_2}\sum_{L_1,L_2}N_0^{\frac34+\delta}L_0^\epsilon N_2(N_{\min}^{12})^{-1}(L_1L_2)^\frac12\|A^{\pm_1}_{N_1,L_1}\|\|\phi^{\pm_2}_{N_2,L_2}\| \\
&\lesssim \sum_{N_1,N_2}\sum_{L_1,L_2}N_0^\delta L_0^\epsilon N_1^{\frac{3}{16}}N_2^{\frac{9}{16}}(L_1L_2)^\frac12\|A^{\pm_1}_{N_1,L_1}\|\|\phi^{\pm_2}_{N_2,L_2}\|.
\end{align*}
Hence we focus on the high-low-high interction $N_{\min}^{12}\ll N_0\sim N_{\max}^{12}$ with low modulation - high frequency.
\subsubsection{Case 1: $L_0\ll L_2$}
\begin{align*}
\mathbf J^1 &\lesssim \sum_{N_1,N_2}\sum_{L_1,L_2}N_0^{-\frac14+\delta}L_0^{-\frac12+\epsilon}N_2\left(\frac{L_2}{N_{\min}^{12}}\right)^\frac12 (N_{\min}^{012}L_{\min}^{01})^\frac12 (N_{\min}^{01}L_{\max}^{01})^\frac14\|A^{\pm_1}_{N_1,L_1}\|\|\phi^{\pm_2}_{N_2,L_2}\| \\
&\lesssim \sum_{N_1,N_2}\sum_{L_1,L_2}N_0^\delta L_0^\epsilon N_2N_0^{-\frac14} (N_{\min}^{01})^\frac14 L_0^{-\frac12}(L_{\min}^{01})^\frac12(L_{\max}^{01})^\frac14L_2^\frac12\|A^{\pm_1}_{N_1,L_1}\|\|\phi^{\pm_2}_{N_2,L_2}\| \\
&\lesssim  \sum_{N_1,N_2}\sum_{L_1,L_2}N_0^\delta N_1^\frac14 N_2^\frac34 L_0^\epsilon(L_1L_2)^\frac12\|A^{\pm_1}_{N_1,L_1}\|\|\phi^{\pm_2}_{N_2,L_2}\|.
\end{align*}
\subsubsection{Case 2: $L_2\ll L_0$}
\begin{align*}
\mathbf J^1 &\lesssim \sum_{N_1,N_2}\sum_{L_1,L_2}N_0^{-\frac14+\delta}L_0^{-\frac12+\epsilon}N_2\left(\frac{L_0}{N_{\min}^{12}}\right)^\frac12 (N_{\min}^{012}L_1)^\frac12 (N_{\min}^{12}L_2)^\frac14\|A^{\pm_1}_{N_1,L_1}\|\|\phi^{\pm_2}_{N_2,L_2}\| \\
&\lesssim \sum_{N_1,N_2}\sum_{L_1,L_2}N_0^\delta L_0^\epsilon N_2N_0^{-\frac14} (N_{\min}^{12})^\frac14 L_1^\frac12 L_2^\frac14\|A^{\pm_1}_{N_1,L_1}\|\|\phi^{\pm_2}_{N_2,L_2}\| \\
&\lesssim  \sum_{N_1,N_2}\sum_{L_1,L_2}N_0^\delta N_1^\frac14 N_2^\frac34 L_0^\epsilon(L_1L_2)^\frac12\|A^{\pm_1}_{N_1,L_1}\|\|\phi^{\pm_2}_{N_2,L_2}\|.
\end{align*}
This completes the proof of \eqref{bi-aphi-jk}.

\subsection{Proof of \eqref{bi-phiphi-jk}}
We write
\begin{align*}
\|Q_{jk}(\overline{\phi_{\pm_1}},\phi_{\pm_2})\|_{X^{s-1,b-1}_{\pm_0}}^2 &= \sum_{N_0,L_0}(N_0^{-\frac34+\delta}L_0^{-\frac12+\epsilon}\|P_{K_{N_0,L_0}^{\pm_0}}Q_{jk}(\overline{\phi_{\pm_1}},\phi_{\pm_2})\|)^2 \\
&= \sum_{N_0,L_0}(N_0^{-\frac34+\delta}L_0^{-\frac12+\epsilon}\sum_{N_1,N_2}\sum_{L_1,L_2}\|P_{K_{N_0,L_0}^{\pm_0}}Q_{jk}(\overline{\phi^{\pm_1}_{N_1,L_1}},\phi^{\pm_2}_{N_2,L_2})\|)^2 \\
&\lesssim \sum_{N_0,L_0}(N_0^{-\frac34+\delta}L_0^{-\frac12+\epsilon}\sum_{N_1,N_2}\sum_{L_1,L_2}N_1N_2\theta_{12}C_{N,L}^{012}\|\phi^{\pm_1}_{N_1,L_1}\|\|\phi^{\pm_2}_{N_2,L_2}\|)^2.
\end{align*}
We need to show the following:
\begin{align*}
\mathbf J^2 &:= \sum_{N_1,N_2}\sum_{L_1,L_2}N_0^{-\frac34+\delta}L_0^{-\frac12+\epsilon}N_1N_2\theta_{12}C_{N,L}^{012}\|\phi^{\pm_1}_{N_1,L_1}\|\|\phi^{\pm_2}_{N_2,L_2}\| \\
&\lesssim \sum_{N_1,N_2}\sum_{L_1,L_2}N_0^\delta L_0^\epsilon (N_1N_2)^\frac34 (L_1L_2)^\frac12\|\phi^{\pm_1}_{N_1,L_1}\|\|\phi^{\pm_2}_{N_2,L_2}\| .
\end{align*}
Note that we have two important interaction:
\begin{align}
& N_0\ll N_1\sim N_2,\quad \pm_1=\pm_2,\quad \theta_{12}\ll1,\\
& N_{\min}^{12}\ll N_0\sim N_{\max}^{12},\quad \theta_{12}\ll1.
\end{align}
\subsubsection{Case 1: $L_0\ll L_2$}
\begin{align*}
\mathbf J^2 &\lesssim \sum_{N_1,N_2}\sum_{L_1,L_2}N_0^{-\frac34+\delta}L_0^{-\frac12+\epsilon}N_1N_2\left(\frac{N_0L_2}{N_1N_2}\right)^\frac12 (N_{\min}^{012}L_{\min}^{01})^\frac12 (N_{\min}^{01}L_{\max}^{01})^\frac14\|\phi^{\pm_1}_{N_1,L_1}\|\|\phi^{\pm_2}_{N_2,L_2}\| \\
&\lesssim \sum_{N_1,N_2}\sum_{L_1,L_2}N_0^\delta L_0^\epsilon (N_{\min}^{012}N_1N_2)^\frac12 N_0^{-\frac14}(N_{\min}^{01})^\frac14 L_0^{-\frac12}(L_{\min}^{01})^\frac12 (L_{\max}^{01})^\frac14 L_2^\frac12\|\phi^{\pm_1}_{N_1,L_1}\|\|\phi^{\pm_2}_{N_2,L_2}\| \\
&\lesssim \sum_{N_1,N_2}\sum_{L_1,L_2}N_0^\delta L_0^\epsilon (N_1N_2)^\frac34 (L_1L_2)^\frac12 \|\phi^{\pm_1}_{N_1,L_1}\|\|\phi^{\pm_2}_{N_2,L_2}\|.
\end{align*}
\subsubsection{Case 2: $L_2\ll L_0$}
\begin{align*}
\mathbf J^2 &\lesssim \sum_{N_1,N_2}\sum_{L_1,L_2}N_0^{-\frac34+\delta}L_0^{-\frac12+\epsilon}N_1N_2\left(\frac{N_0L_0}{N_1N_2}\right)^\frac12 (N_{\min}^{012}L_1)^\frac12 (N_{\min}^{12}L_2)^\frac14\|\phi^{\pm_1}_{N_1,L_1}\|\|\phi^{\pm_2}_{N_2,L_2}\| \\
&\lesssim \sum_{N_1,N_2}\sum_{L_1,L_2}N_0^\delta L_0^\epsilon (N_{\min}^{012}N_1N_2)^\frac12 N_0^{-\frac14}(N_{\min}^{12})^\frac14 L_1^\frac12 L_2^\frac14\|\phi^{\pm_1}_{N_1,L_1}\|\|\phi^{\pm_2}_{N_2,L_2}\| \\
&\lesssim \sum_{N_1,N_2}\sum_{L_1,L_2}N_0^\delta L_0^\epsilon (N_1N_2)^\frac34 (L_1L_2)^\frac12\|\phi^{\pm_1}_{N_1,L_1}\|\|\phi^{\pm_2}_{N_2,L_2}\| .
\end{align*}

\section{Trilinear estimates}
In this section, we give the proof of the trilinear estimates in Section 4. Even though we cannot reveal null forms which we enjoy in the previous section, by the mercy of the sharp bilinear estimates of wave type \eqref{bi-selb-1}, \eqref{bi-selb-2}, we obtain the required estimates.
We shall show the following estimates.
\begin{align}
&\|A_{\pm_1}A_{\pm_2}\phi_{\pm_3}\|_{X^{s-\frac12,b-1}_\pm(S_T)} \lesssim \|A_{\pm_1}\|_{X^{s,b}_{\pm_1}}\|A_{\pm_2}\|_{X^{s,b}_{\pm_2}}\|\phi_{\pm_3}\|_{X^{s+\frac12,b}_{\pm_3}},\label{tri-aaphi} \\
& \|\overline{\phi_{\pm_1}}A_{\pm_2}\phi_{\pm_3}\|_{X^{s,b-1}_\pm(S_T)} \lesssim \|\phi_{\pm_1}\|_{X^{s+\frac12,b}_{\pm_1}}\|A_{\pm_2}\|_{X^{s,b}_{\pm_2}}\|\phi_{\pm_3}\|_{X^{s+\frac12,b}_{\pm_3}}\label{tri-phiaphi}.
\end{align}

The proof of \eqref{tri-aaphi} and \eqref{tri-phiaphi} is essentially same. We only prove the estimate \eqref{tri-aaphi}.



\subsection{Proof of \eqref{tri-aaphi}}
By the definition of $X^{s,b}_\pm$ we write
\begin{align*}
\|A_{\pm_1}A_{\pm_2}\phi_{\pm_3}\|_{X^{s-\frac12,b-1}_{\pm_4}}^2 &\lesssim \sum_{N_4,L_4}(N_4^{-\frac14+\delta}L_4^{-\frac12+\epsilon}\|P_{K_{N_4,L_4}^{\pm_4}}(A_{\pm_1}A_{\pm_2}\phi_{\pm_3})\|)^2 \\
& = \sum_{N_4,L_4}\left(N_4^{-\frac14+\delta}L_4^{-\frac12+\epsilon}\sup_{\|\varphi\|=1}\left|\int(A_{\pm_1}A_{\pm_2}\phi_{\pm_3})\overline{\varphi_{N_4,L_4}}\,dtdx\right|\right)^2\\
&\lesssim \sum_{N_4,L_4}\left(N_4^{-\frac14+\delta}L_4^{-\frac12+\epsilon}\sum_{N,L}\sup_{\|\varphi\|=1}\left|\int A^{\pm_1}_{N_1,L_1}A^{\pm_2}_{N_2,L_2}\phi^{\pm_3}_{N_3,L_3}\overline{\varphi^{\pm_4}_{N_4,L_4}}\,dtdx\right|\right)^2 \\
&\lesssim \sum_{N_4,L_4}\left(N_4^{-\frac14+\delta}L_4^{-\frac12+\epsilon}\sum_{N,L}C_{N,L}^{012}C_{N,L}^{034}\|A^{\pm_1}_{N_1,L_1}\|\|A^{\pm_2}_{N_2,L_2}\|\|\phi^{\pm_3}_{N_3,L_3}\|\right)^2
\end{align*}
To prove \eqref{tri-aaphi}, we need to show the following:
\begin{align*}
\mathbf K^1 &:= \sum_{N_1,N_2,N_3}\sum_{L_1,L_2,L_3}\sum_{N_0,L_0}N_4^{-\frac14+\delta}L_4^{-\frac12+\epsilon}C_{N,L}^{012}C_{N,L}^{034}\|A^{\pm_1}_{N_1,L_1}\|\|A^{\pm_2}_{N_2,L_2}\|\|\phi^{\pm_3}_{N_3,L_3}\| \\
&\lesssim \sum_{N_1,N_2,N_3}\sum_{L_1,L_2,L_3}N_4^\delta L_4^\epsilon (N_1N_2)^\frac14N_3^\frac34 (L_1L_2L_3)^\frac12\|A^{\pm_1}_{N_1,L_1}\|\|A^{\pm_2}_{N_2,L_2}\|\|\phi^{\pm_3}_{N_3,L_3}\|.
\end{align*}
Indeed, the general square summation by $L$ gives us
\begin{align*}
\|A_{\pm_1}A_{\pm_2}\phi_{\pm_3}\|_{X^{s-\frac12,b-1}_{\pm_4}}^2 &\lesssim \sum_{N_4\ge1}(N_4^\delta \sum_{N_1,N_2,N_3}(N_{\rm med}^{123})^\epsilon N_1^\frac14N_2^\frac14N_3^\frac34 \|A^{\pm_1}_{N_1}\|_{X^{0,b}_{\pm_1}}\|A^{\pm_2}_{N_2}\|_{X^{0,b}_{\pm_2}}\|\phi^{\pm_3}_{N_3}\|_{X^{0,b}_{\pm_3}})^2.
\end{align*}
If $N_4\sim N_3$, then
\begin{align*}
\|A_{\pm_1}A_{\pm_2}\phi_{\pm_3}\|_{X^{s-\frac12,b-1}_{\pm_4}}^2 &\lesssim \sum_{N_4\ge1}(N_4^\delta \sum_{N_1,N_2,N_3}(N_1N_2)^\epsilon N_1^\frac14N_2^\frac14N_3^\frac34 \|A^{\pm_1}_{N_1}\|_{X^{0,b}_{\pm_1}}\|A^{\pm_2}_{N_2}\|_{X^{0,b}_{\pm_2}}\|\phi^{\pm_3}_{N_3}\|_{X^{0,b}_{\pm_3}})^2 \\
&\lesssim \sum_{N_4}(N_4^\delta\sum_{N_3;N_3\sim N_4}N_3^\frac34\|\phi^{\pm_3}_{N_3}\|_{X^{0,b}_{\pm_3}})^2\|A_{\pm_1}\|_{X^{s,b}_{\pm_1}}^2\|A_{\pm_2}\|_{X^{s,b}_{\pm_2}}^2 \\
&\lesssim \|A_{\pm_1}\|_{X^{s,b}_{\pm_1}}^2\|A_{\pm_2}\|_{X^{s,b}_{\pm_2}}^2\|\phi_{\pm_3}\|_{X^{s+\frac12,b}_{\pm_3}}^2.
\end{align*}
If $N_4\gg N_3$, then we further treat three cases: $N_1\ll N_2$, $N_1\sim N_2$, and $N_1\gg N_2$. For $N_1\ll N_2$, we must have $N_4\sim N_2$ and then
\begin{align*}
\|A_{\pm_1}A_{\pm_2}\phi_{\pm_3}\|_{X^{s-\frac12,b-1}_{\pm_4}}^2 &\lesssim \sum_{N_4\ge1}(N_4^\delta \sum_{N_1,N_2,N_3}(N_1N_3)^\epsilon N_1^\frac14N_2^\frac14N_3^\frac34 \|A^{\pm_1}_{N_1}\|_{X^{0,b}_{\pm_1}}\|A^{\pm_2}_{N_2}\|_{X^{0,b}_{\pm_2}}\|\phi^{\pm_3}_{N_3}\|_{X^{0,b}_{\pm_3}})^2 \\
&\lesssim \sum_{N_4}(N_4^\delta\sum_{N_2;N_2\sim N_4}N_2^\frac14\|A^{\pm_2}_{N_2}\|_{X^{0,b}_{\pm_3}})^2 \|A_{\pm_1}\|_{X^{s,b}_{\pm_1}}^2\|\phi_{\pm_3}\|_{X^{s+\frac12,b}_{\pm_3}}^2 \\
&\lesssim \|A_{\pm_1}\|_{X^{s,b}_{\pm_1}}^2\|A_{\pm_2}\|_{X^{s,b}_{\pm_2}}^2\|\phi_{\pm_3}\|_{X^{s+\frac12,b}_{\pm_3}}^2.
\end{align*}
If $N_1\sim N_2$, then we have $N_4\lesssim N_2$ and hence
\begin{align*}
\|A_{\pm_1}A_{\pm_2}\phi_{\pm_3}\|_{X^{s-\frac12,b-1}_{\pm_4}}^2 &\lesssim \sum_{N_4\ge1}(N_4^\delta \sum_{N_1,N_2,N_3}(N_1N_3)^\epsilon N_1^\frac14N_2^\frac14N_3^\frac34 \|A^{\pm_1}_{N_1}\|_{X^{0,b}_{\pm_1}}\|A^{\pm_2}_{N_2}\|_{X^{0,b}_{\pm_2}}\|\phi^{\pm_3}_{N_3}\|_{X^{0,b}_{\pm_3}})^2 \\
&\lesssim \sum_{N_4}(N_4^\delta N_4^{\epsilon-2\delta})^2\|A_{\pm_1}\|_{X^{s,b}_{\pm_1}}^2\|A_{\pm_2}\|_{X^{s,b}_{\pm_2}}^2\|\phi_{\pm_3}\|_{X^{s+\frac12,b}_{\pm_3}}^2.
\end{align*}
The case $N_1\gg N_2$ is symmetric to $N_1\ll N_2$. Also $N_4\ll N_3$  can be treated similarly.\\
Now we deal with the estimate of $\mathbf K^1$. We assume
$$
L_1\le L_2,\quad L_3\le L_4.
$$

\subsubsection{Case 1: $L_0\ll L_2,L_4$}
\begin{align*}
\mathbf K^1 &\lesssim \sum_{N_1,N_2,N_3}\sum_{L_1,L_2,L_3}\sum_{N_0,L_0}N_4^{-\frac14+\delta}L_4^{-\frac12+\epsilon}(N_{\min}^{012}L_{\min}^{01}N_{\min}^{034}L_{\min}^{03})^\frac12(N_{\min}^{01}L_{\max}^{01}N_{\min}^{03}L_{\max}^{03})^\frac14\|A^{\pm_1}_{N_1,L_1}\|\|A^{\pm_2}_{N_2,L_2}\|\|\phi^{\pm_3}_{N_3,L_3}\| \\
&\lesssim \sum_{N_1,N_2,N_3}\sum_{L_1,L_2,L_3}N_4^\delta L_4^\epsilon (N_1N_2)^\frac14N_3^\frac34(L_1L_2L_3)^\frac12\|A^{\pm_1}_{N_1,L_1}\|\|A^{\pm_2}_{N_2,L_2}\|\|\phi^{\pm_3}_{N_3,L_3}\|.
\end{align*}

\subsubsection{Case 2: $L_4\ll L_0\ll L_2$}
\begin{align*}
\mathbf K^1 &\lesssim \sum_{N_1,N_2,N_3}\sum_{L_1,L_2,L_3}\sum_{N_0,L_0}N_4^{-\frac14+\delta}L_4^{-\frac12+\epsilon}(N_{\min}^{012}L_{\min}^{01}N_{\min}^{034}L_3)^\frac12(N_{\min}^{01}L_{\max}^{01}N_{\min}^{34}L_4)^\frac14\|A^{\pm_1}_{N_1,L_1}\|\|A^{\pm_2}_{N_2,L_2}\|\|\phi^{\pm_3}_{N_3,L_3}\| \\
&\lesssim \sum_{N_1,N_2,N_3}\sum_{L_1,L_2,L_3}N_4^\delta L_4^\epsilon (N_1N_2)^\frac14N_3^\frac34(L_1L_2L_3)^\frac12\|A^{\pm_1}_{N_1,L_1}\|\|A^{\pm_2}_{N_2,L_2}\|\|\phi^{\pm_3}_{N_3,L_3}\|.
\end{align*}

\subsubsection{Case 3: $L_2,L_4\ll L_0$}
\begin{align*}
\mathbf K^1 &\lesssim \sum_{N_1,N_2,N_3}\sum_{L_1,L_2,L_3}\sum_{N_0,L_0}N_4^{-\frac14+\delta}L_4^{-\frac12+\epsilon}(N_{\min}^{012}L_1N_{\min}^{034}L_3)^\frac12(N_{\min}^{12}L_2N_{\min}^{34}L_4)^\frac14\|A^{\pm_1}_{N_1,L_1}\|\|A^{\pm_2}_{N_2,L_2}\|\|\phi^{\pm_3}_{N_3,L_3}\| \\
&\lesssim \sum_{N_1,N_2,N_3}\sum_{L_1,L_2,L_3}N_4^\delta L_4^\epsilon (N_1N_2)^\frac14N_3^\frac34(L_1L_2L_3)^\frac12\|A^{\pm_1}_{N_1,L_1}\|\|A^{\pm_2}_{N_2,L_2}\|\|\phi^{\pm_3}_{N_3,L_3}\|.
\end{align*}

\subsubsection{Case 4: $L_2\ll L_0\ll L_4$}
\begin{align*}
\mathbf K^1 &\lesssim \sum_{N_1,N_2,N_3}\sum_{L_1,L_2,L_3}\sum_{N_0,L_0}N_4^{-\frac14+\delta}L_4^{-\frac12+\epsilon}(N_{\min}^{012}L_1N_{\min}^{034}L_{\min}^{03})^\frac12(N_{\min}^{12}L_2N_{\min}^{03}L_{\max}^{03})^\frac14\|A^{\pm_1}_{N_1,L_1}\|\|A^{\pm_2}_{N_2,L_2}\|\|\phi^{\pm_3}_{N_3,L_3}\| \\
&\lesssim \sum_{N_1,N_2,N_3}\sum_{L_1,L_2,L_3}N_4^\delta L_4^\epsilon (N_1N_2)^\frac14N_3^\frac34(L_1L_2L_3)^\frac12\|A^{\pm_1}_{N_1,L_1}\|\|A^{\pm_2}_{N_2,L_2}\|\|\phi^{\pm_3}_{N_3,L_3}\|.
\end{align*}

\section{Estimates of $\mathcal V(\phi,\phi^\dagger)$}
In this section we focus on the Higgs potential $\mathcal V(\phi,\phi^\dagger)$. We only need to prove the following estimates:
\begin{align}
\|\phi_{\pm_1}\phi_{\pm_2}\phi_{\pm_3}\|_{X^{s-\frac12,b-1}_{\pm}(S_T)} &\lesssim \prod_{j=1}^3\|\phi_{\pm_j}\|_{X^{s+\frac12,b}_{\pm_j}},\label{higgs-poten1} \\
\|\phi_{\pm_1}\phi_{\pm_2}\phi_{\pm_3}\phi_{\pm_4}\phi_{\pm_5}\|_{X^{s-\frac12,b-1}_{\pm}(S_T)} &\lesssim \prod_{j=1}^5\|\phi_{\pm_j}\|_{X^{s+\frac12,b}_{\pm_j}}.\label{higgs-poten2}
\end{align}

The proof of \eqref{higgs-poten1} is already treated in Section 6. We focus on the proof of \eqref{higgs-poten2}.

\subsection{Proof of \eqref{higgs-poten2}}
The quintic term seems very cumbersome, however, we can prove the estimate easily by using Bernstein's inequality \eqref{Bernstein-ineq}. Indeed,
\begin{align*}
\|\phi_{\pm_1}\phi_{\pm_2}\phi_{\pm_3}\phi_{\pm_4}\phi_{\pm_0}\|_{X^{s-\frac12,b-1}_{\pm_0}} &\lesssim \sum_{N_0,L_0}(N_0^{-\frac14+\delta}L_0^{-\frac12+\epsilon}\|P_{K_{N_0,L_0}^{\pm_0}}(\phi_{\pm_1}\phi_{\pm_2}\phi_{\pm_3}\phi_{\pm_4}\phi_{\pm_0})\|)^2 \\
&\lesssim \sum_{N_0,L_0}(N_0^{-\frac14+\delta}L_0^{-\frac12+\epsilon}\|P_{K_{N_0,L_0}^{\pm_0}}(\phi_{\pm_1}\phi_{\pm_2}\phi_{\pm_3}\phi_{\pm_4})\|_{L^4}\|P_{K_{N_0,L_0}^{\pm_0}}\phi_{\pm_0}\|_{L^4})^2 \\
&\lesssim \sum_{N_0,L_0}(N_0^{\frac34+\delta}L_0^{\epsilon}\|P_{K_{N_0,L_0}^{\pm_0}}(\phi_{\pm_1}\phi_{\pm_2}\phi_{\pm_3}\phi_{\pm_4})\|\|P_{K_{N_0,L_0}^{\pm_0}}\phi_{\pm_0}\|)^2 \\
&\lesssim \sum_{N_0,L_0}(N_0^{\frac34+\delta}L_0^{\frac12+\epsilon}N_0\|P_{K_{N_0,L_0}^{\pm_0}}(\phi_{\pm_1}\phi_{\pm_2})\|\|P_{K_{N_0,L_0}^{\pm_0}}(\phi_{\pm_3}\phi_{\pm_4})\|\|\phi^{\pm_0}_{N_0,L_0}\|)^2 \\
&\lesssim \sum_{N_0,L_0}(N_0^{\frac34+\delta}L_0^{\frac12+\epsilon}N_0\sum_{N,L}\|P_{K_{N_0,L_0}^{\pm_0}}(\phi^{\pm_1}_{N_1,L_1}\phi^{\pm_2}_{N_2,L_2})\|\|P_{K_{N_0,L_0}^{\pm_0}}(\phi^{\pm_3}_{N_3,L_3}\phi^{\pm_4}_{N_4,L_4})\|\|\phi^{\pm_0}_{N_0,L_0}\|)^2
\end{align*}
Thus we need to show that
\begin{align*}
\mathbf K &:= N_0C_{N,L}^{012}C_{N,L}^{034}\|\phi^{\pm_1}_{N_1,L_1}\|\|\phi^{\pm_2}_{N_2,L_2}\|\|\phi^{\pm_3}_{N_3,L_3}\|\phi^{\pm_4}_{N_4,L_4}\|\|\phi^{\pm_0}_{N_0,L_0}\| \\
&\lesssim (N_1N_2N_3N_4)^\frac34 (L_1L_2L_3L_4)^\frac12\|\phi^{\pm_1}_{N_1,L_1}\|\|\phi^{\pm_2}_{N_2,L_2}\|\|\phi^{\pm_3}_{N_3,L_3}\|\phi^{\pm_4}_{N_4,L_4}\|\|\phi^{\pm_0}_{N_0,L_0}\|.
\end{align*}
This is trivial, since $C_{N,L}^{012}C_{N,L}^{034}$ contains $N^\frac32$ and $N^1N^\frac32 = N^\frac52\ll N^{\frac{12}{4}}$.\\
This completes the proof of Theorem \ref{lwp}.


\section{The failure of smoothness}
In this section we show the flow $(\phi(0), A_\mu(0)) \mapsto (\phi(t), A_\mu(t))$ is not $C^3$ near the origin. Especially, we show the gauge field is not smooth at the origin in $H^\frac14$.
For this purpose we consider the system given by
\begin{align}\label{csh-del}
\left\{
\begin{array}{l}
\square\phi = -2\left[A^{\mu}, \partial_{\mu} \phi\right]-\left[A_{\mu},\left[A^{\mu}, \phi\right]\right] + \mathcal V(\phi,\phi^\dagger), \\
\square A_{\mu} = \left[\partial^{\nu} A_{\mu}, A_{\nu}\right] - \epsilon_{\mu \nu \alpha}\left(Q^{\nu \alpha}\left(\phi^{\dagger}, \phi\right)+Q^{\nu \alpha}\left(\phi, \phi^{\dagger}\right)\right) -\epsilon_{\mu \nu \alpha}\partial^{\nu}\left(\left[\phi^{\dagger},\left[A^{\alpha}, \phi\right]\right]-\big[\left[A^{\alpha}, \phi\right]^{\dagger}, \phi\big]\right) \\
 (\phi, \partial_t \phi)(0) = (\delta f, 0), \quad A_0(0) = \delta a_0, \quad A_j(0) = 0 \quad \partial_t A_0(0) = 0,\\
 \partial_t A_j(0) = \delta \partial_j a_0 + \delta^2\epsilon_{0jk}([f^\dagger, \partial^k f] - [(\partial^k f)^\dagger, f]),
\end{array}\right.\end{align}
where $0<\delta\ll1$ and $f$ is a $\frak g$-valued smooth function. We denote the local solution of \eqref{csh-del} by $(\phi(\delta,t), A_\mu(\delta,t))$. If $f = f_1T^1$ for a smooth scalar function $f_1$, then $\partial_t A_j(0) = \delta \partial_j a_0 $.

\subsection{Set up}
We prove by contradiction. Assume that the flow is $C^3$ at the origin in $H^s$ and $f = f_1 T^1$ and $a_0 = a_{0, 2}T^2$. Since $(\phi, \partial_t \phi)(\delta = 0, t = 0) = (0, 0)$ and $(A_\mu, \partial_tA_\mu)(\delta = 0, t = 0) = (0, 0)$, the solution $\phi(\delta = 0, t) = 0$ and $A_\mu(\delta = 0, t) = 0$. By taking derivative to \eqref{csh-del} w.r.t. $\delta$ we see that since
$$
A_\mu(\delta = 0, 0) = 0,\quad \partial_\delta A_\mu(\delta = 0, t = 0) =  \delta_{\mu 0}a_0,\quad \partial_t\partial_\delta A_\mu(\delta = 0, t = 0) = \delta_{\mu j}\partial_j a_0,
$$
the solution $\partial_\delta A_\mu(\delta = 0, t)$ is written as
$$
\partial_\delta A_\mu(\delta = 0, t) = \sum_{\pm} A_{\mu, \pm}^{\rm hom} = \sum_{\pm} \frac12e^{\mp it D}\left(\delta_{\mu 0}a_{0,2} \pm \frac1{iD}\delta_{\mu j}\partial^j a_{0,2}\right)T^2.
$$
On the other hand, $\partial_\delta \phi$ satisfies from the formula \eqref{phi-a} below that
$$
\square \partial_\delta\phi(\delta = 0) = -m^2\partial_\delta \phi(\delta = 0),\quad (\partial_\delta \phi(\delta = 0, t = 0), \partial_t\partial_\delta \phi(\delta = 0, t = 0)) = (f, 0),
$$
where $m^2 = 2v^4$. Then the solution $\partial_\delta \phi_a(\delta = 0) = 0$ for $a > 1$, and hence
$$
\partial_\delta \phi(\delta = 0) = \sum_{\pm}\frac12 e^{\mp it\sqrt{m^2-\Delta}}f_1T^1.
$$

Let us consider the second derivatives.
\begin{align*}
\partial_\delta^2 \left[\partial^{\nu} A_{\mu}, A_{\nu}\right](\delta = 0, t) = 2\sum_{\pm_1, \pm_2}\left[\partial^{\nu} A_{\mu, \pm_1 }^{\rm hom}(t), A_{\nu, \pm_2}^{\rm hom}(t)\right] = 2\sum_{\pm_1, \pm_2}\left(\cdots \right)[T^2, T^2] = 0.
\end{align*}
\begin{align*}
\partial_\delta^2(Q^{\nu \alpha}\left(\phi^{\dagger}, \phi\right) &+Q^{\nu \alpha}\left(\phi, \phi^{\dagger}\right))(\delta = 0, t)\\
& = \frac12 \sum_{\pm_1, \pm_2}Q^{\nu \alpha}\left((e^{\mp_1 it\sqrt{m^2-\Delta}}f_1)^*T^1, e^{\mp_2 it\sqrt{m^2-\Delta}}f_1T^1\right)\\
&\qquad  + \frac12 \sum_{\pm_1, \pm_2} Q^{\nu \alpha}\left(e^{\mp_1 it\sqrt{m^2-\Delta}}f_1 T^1, (e^{\mp_2 it\sqrt{m^2-\Delta}}f_1)^* T^1\right) \\
&= 0 \qquad (\because Q^{\nu\alpha}(h_1T^1, g_1T^1) = -Q^{\nu\alpha}(g_1T^1, h_1T^1)).
\end{align*}
Clearly the second derivative of cubic term of RHS of $\square A_\mu$ is $0$ at $\delta = 0$. Hence $\partial_\delta^2A_\mu(\delta = 0, t) = 0$.

Taking the second derivative to the equation of $\phi$ and then removing the cubic terms, since the initial data are zero and the linear term plays a role of mass, we have
\begin{align}\label{2nd-der}
\partial_\delta^2\phi(\delta=0,t) = -\sum_{\pm_1,\pm_2,\pm_3}\int_0^te^{\mp_1i(t-t')\sqrt{m^2-\Delta}}(2iD)^{-1}[A^{\rm hom}_{\mu,\pm_2}(t'),\partial^\mu e^{\mp_3it'\sqrt{m^2-\Delta}}f]\,dt',
\end{align}
If the flow is $C^2$ in $H^{s} \times H^\sigma$, within a local existence time interval, the following inequality holds:
\begin{align}\label{c2-csh-ineq}
\sup_{0\le t\le T}\|\partial_\delta^2\phi(\delta = 0, t)\|_{H^s} \lesssim \|f_1\|_{H^s}\|a_0\|_{H^{\sigma}}.
\end{align}

In order to treat the third derivative, let us first observe that
\begin{align*}
&Q^{\nu\alpha}(\partial_\delta^2\phi^\dagger(\delta = 0), \partial_\delta \phi(\delta = 0)) + Q^{\nu\alpha}(\partial_\delta \phi(\delta = 0), \partial_\delta^2\phi^\dagger(\delta = 0))\\
&= \left(\cdots\right)([T^2, T^1]T^1 + [T^1, [T^2, T^1]] = \left(\cdots\right)([T^2, T^1]T^1 + T^1 [T^2, T^1]) =  \left(\cdots\right)[T^2, T^1T^1].
\end{align*}
It is an easy exercise to show that $[T^a, T^bT^b] = 0$ for any generator $T^a$ and $T^b$. (For instance see the page 420 of \cite{sred}.)
Therefore, we obtain
\begin{align}\begin{aligned}\label{3rd-d-a}
&\partial_\delta^3A_2(\delta = 0) \\
&= \frac {3i}2\sum_{\pm_1, \pm_2, \pm_3, \pm_4}\mp_1 \int_0^t\frac{e^{\mp_1 i(t-t')D}}{2iD}\epsilon_{201}\partial^{0}\left(\left[(e^{\mp_2it'\sqrt{m^2-\Delta}}f)^{\dagger},\left[A_{\pm_3}^{{\rm hom}, 1}, e^{\mp_4it'\sqrt{m^2-\Delta}}f\right]\right] \right.\\
&\qquad\qquad\qquad\qquad\qquad\qquad\qquad\qquad\qquad \left. - \left[\left[A_{\pm_2}^{{\rm hom}, 1}, e^{\mp_3it'\sqrt{m^2 - \Delta}}f\right]^{\dagger}, e^{\mp_4it'\sqrt{m^2-\Delta}}f\right]\right)dt'\\
&\quad+ \frac {3i}2\sum_{\pm_1, \pm_2, \pm_3, \pm_4}\mp_1 \int_0^t\frac{e^{\mp_1 i(t-t')D}}{2iD}\epsilon_{210}\partial^{1}\left(\left[(e^{\mp_2it'\sqrt{m^2-\Delta}}f)^{\dagger},\left[A_{\pm_3}^{{\rm hom}, 0}, e^{\mp_4it'\sqrt{m^2-\Delta}}f\right]\right] \right.\\
&\qquad\qquad\qquad\qquad\qquad\qquad\qquad\qquad\qquad \left. - \left[\left[A_{\pm_2}^{{\rm hom}, 0}, e^{\mp_3it'\sqrt{m^2 - \Delta}}f\right]^{\dagger}, e^{\mp_4it'\sqrt{m^2-\Delta}}f\right]\right)dt'\\
&=: I + I\!I.
\end{aligned}\end{align}
If the flow is $C^3$ in $H^{s} \times H^\sigma$ within a local existence time interval, then we have
\begin{align}\label{csh-ineq}
\sup_{0\le t\le T}\|\partial_\delta^3A_0(\delta = 0, t)\|_{H^\sigma} \lesssim \|f_1\|_{H^s}^2\|a_0\|_{H^{\sigma}}.
\end{align}

\subsection{Failure of \eqref{csh-ineq} when $s < \frac12$}
Given $\lambda \gg 1 + m^2$, let us define $W_\lambda$ by $\{\xi = (\xi_1, \xi_2) : |\xi_1-\lambda| \le 10^{-6}\lambda,\;\; |\xi_2| \le 10^{-6}\lambda^\frac12\}$, and $-W_\lambda = \{\xi : -\xi \in W_\lambda\}$.
Then for any $\xi \in 2W_{\lambda}$ there exists $0 < \rho < 1$ such that $2(1-\rho)\lambda \le |\xi| \le 2(1 + \rho)\lambda$.

We now take $f_1$ and $a_{0, 2}$ as
$$
\widehat f_1(\xi) = \chi_{W_\lambda},\quad \widehat a_{0, 2} = \chi_{2W_\lambda}(\xi).
$$

Using integration by parts and the fact that $A_{\pm_3}^{{\rm hom, 1}} = \pm_3e^{\mp_3it'D}\frac1{2iD}\partial_1a_{0, 2}T^2 $, we have
\begin{align*}
I &= \frac{3}{4}\sum_{\pm_1, \cdots, \pm_4} \mp_1\frac1{D}\left([(e^{\mp_2it\sqrt{m^2-\Delta}}f)^\dagger, [A_{\pm_3}^{{\rm hom}, 1}(t), e^{\mp_4it\sqrt{m^2-\Delta}}f]]\right.\\
&\qquad\qquad\qquad\qquad\qquad\qquad\qquad\left.+ [[e^{\mp_2it\sqrt{m^2-\Delta}}f, A_{\pm_3}^{{\rm hom, 1}}(t)]^\dagger, e^{\mp_4it\sqrt{m^2-\Delta}}f]\right)\\
&\qquad\qquad + \frac34\sum_{\pm_1, \cdots, \pm_4}\!\pm_1\int_0^t e^{\mp_1i(t-t')D}\left([(e^{\mp_2it'\sqrt{m^2-\Delta}}f)^\dagger, [A_{\pm_3}^{{\rm hom}, 1}(t'), e^{\mp_4it'\sqrt{m^2-\Delta}}f]]\right.\\
&\qquad\qquad\qquad\qquad\qquad\qquad\qquad \left.+[[e^{\mp_2it'\sqrt{m^2-\Delta}}f, A_{\pm_3}^{{\rm hom, 1}}(t')]^\dagger, e^{\mp_4it\sqrt{m^2-\Delta}}f]\right)dt'\\
&=: I_1 + I_2.
\end{align*}

Taking space Fourier transform $\mathcal F_x$ for $\xi \in 2W_{\lambda}$ gives
$$
|\mathcal F_x I_1(t, \xi)| \lesssim \frac1{|\xi|}\int\!\!\int_{\mathbb R^2 \times \mathbb R^2} \chi_{W_\lambda}(\zeta-\eta)\chi_{W_\lambda}(\zeta)\chi_{ W_\lambda}(\xi-\eta)\,d\zeta d\eta \lesssim \lambda^2.
$$
Then the direct integration w.r.t. $t'$ gives us
\begin{align*}
\mathcal F_x I_2(t, \xi) &= \frac{3i}{8}\sum_{\pm_1, \cdots, \pm_4} \pm_1 e^{\mp_1it|\xi|}\int\!\!\int_{\mathbb R^2 \times \mathbb R^2}\mathbf m_{1234} \chi_{ W_\lambda}(\zeta-\eta)\left(\pm_3\frac{\zeta_1}{|\zeta|}\right)\chi_{2W_\lambda}(\zeta)\chi_{  W_\lambda}(\xi-\eta)\,d\zeta d\eta [T^1, [T^2,T^1]]\\
&\quad + \frac{3i}{8}\sum_{\pm_1, \cdots, \pm_4} \pm_1 e^{\mp_1it|\xi|}\int\!\!\int_{\mathbb R^2 \times \mathbb R^2}\widetilde{\mathbf m}_{1234} \chi_{W_\lambda}(\zeta-\eta)\left(\pm_3\frac{\zeta_1}{|\zeta|}\right)\chi_{2W_\lambda}(-\zeta)\chi_{W_\lambda}(\xi-\eta)\,d\zeta d\eta[T^1, [T^2,T^1]]\\
&=: (\mathcal N_{1234} + \widetilde{\mathcal N}_{1234})[T^1, [T^2,T^1]],
\end{align*}
where
\begin{align*}
&\mathbf m_{1234}(t,\xi,\eta,\zeta) = \frac{e^{it\omega_{1234}}-1}{i\omega_{1234}},\quad \widetilde{\mathbf m}_{1234}(t,\xi,\eta,\zeta) = \frac{e^{it\widetilde{\omega}_{1234}}-1}{i\widetilde{\omega}_{1234}},\\
&\omega_{1234} = \pm_1|\xi| \pm_2\sqrt{m^2+|\zeta - \eta|^2} -\pm_3|\zeta| -\pm_4\sqrt{m^2+|\xi-\eta|^2},\\
&\widetilde{\omega}_{1234} = \pm_1|\xi| \pm_2\sqrt{m^2+|\zeta - \eta|^2} + \pm_3|\zeta| -\pm_4\sqrt{m^2+|\xi-\eta|^2}.
\end{align*}
For $\xi \in 2W_{\lambda}$ from the support conditions of the integrand of $\mathcal N_{1234}$ that $\eta$ is at least in $W_\lambda$. On the other hand, the support condition of integrand in $\widetilde{\mathcal N}_{1234}$ enforces that
$$
\eta \in (- W_\lambda - 2W_\lambda) \cap (2W_{\lambda} -  W_\lambda),
$$
which is impossible. Hence $\widetilde{\mathcal N}_{1234}$ is vanishing.

Now by Taylor expansion $\omega_{1234}$ can be estimated as follows: if $(\pm_1 = \pm_3) \& (\pm_2 = \pm_4)$, or $(\pm_1 = \pm_3) \neq (\pm_2 = \pm_4)$, or $(\pm_1, \pm_3) = (+,-) \& (\pm_2, \pm_4) = (-, +)$, or $(-, +) \& (+, -)$ (these cases are called {\it resonance} ones denoted by $\mathcal R$), then
\begin{align}\label{res}
|\omega_{1234}| \lesssim \lambda^\frac12.
\end{align}
Otherwise,
\begin{align}\label{res-comp}
|\omega_{1234}| \sim \lambda.
\end{align}

If $\pm_1, \cdots, \pm_4 \in \mathcal R$, then by taking $t = \varepsilon \lambda^{-\frac12}$ for some fixed $\lambda^{-\frac12} \ll \varepsilon \ll 1$, from \eqref{res} we obtain
$$
\mathbf m_{1234} = t(1 + O_{1234}(\varepsilon)),
$$
where $O_{1234}(\varepsilon)$ is $O(\varepsilon)$ depending on $\pm_1, \cdots, \pm_4$. If $\pm_1, \cdots, \pm_4 \in \mathcal R^c$, then by \eqref{res-comp}
$$
|\mathbf m_{1234}| \lesssim \lambda^{-1}.
$$
Using these estimates we have
$$
\left|\mathcal N_{1234}\right| \ge \left|\sum_{\pm_1,\cdots, \pm_4 \in \mathcal R}\left(\cdots\right)\right| - \left|\sum_{\pm_1,\cdots, \pm_4 \in \mathcal R^c}\left(\cdots\right)\right|
$$
In the summation in $\mathcal R$ there are four cases of $\pm_1 = \pm_3$ and two cases of $\pm_1 \neq \pm_3$, which results in
\begin{align*}
\left|\sum_{\pm_1,\cdots, \pm_4 \in \mathcal R}\left(\cdots\right)\right| &\ge \frac34 t\left| \int\!\!\int_{\mathbb R^2\times \mathbb R^2}\cos{t|\xi|} \frac{\zeta_1}{|\zeta|}\chi_{ W_\lambda}(\zeta-\eta)\chi_{2W_\lambda}(\zeta)\chi_{W_\lambda}(\xi-\eta)\,d\zeta d\eta\right| \\
&\qquad - Ct\varepsilon\int\!\!\int_{\mathbb R^2\times \mathbb R^2} \frac{\zeta_1}{|\zeta|}\chi_{W_\lambda}(\zeta-\eta)\chi_{2W_\lambda}(\zeta)\chi_{W_\lambda}(\xi-\eta)\,d\zeta d\eta\\
&\ge \frac34t|\cos(t|\xi|)| \int\!\!\int_{\mathbb R^2\times \mathbb R^2} \frac{\zeta_1}{|\zeta|}\chi_{W_\lambda}(\zeta-\eta)\chi_{2W_\lambda}(\zeta)\chi_{ W_\lambda}(\xi-\eta)\,d\zeta d\eta \\
&\qquad- Ct\varepsilon\int\!\!\int_{\mathbb R^2\times \mathbb R^2}\frac{\zeta_1}{|\zeta|}\chi_{W_\lambda}(\zeta-\eta)\chi_{2W_\lambda}(\zeta)\chi_{W_\lambda}(\xi-\eta)\,d\zeta d\eta.
\end{align*}
Let us take $\lambda = \lambda(\varepsilon, \rho, k)$ for any large integer $k$ such that
$$
\frac{2k\pi-\varepsilon}{\varepsilon(1-\rho)} \le \lambda^\frac12 \le \frac{2k\pi+\varepsilon}{\varepsilon(1+\rho)}.
$$
Then
$$
\left|\sum_{\pm_1,\cdots, \pm_4 \in \mathcal R}\left(\cdots\right)\right| \gtrsim t(1 - C\varepsilon) \int\!\!\int_{\mathbb R^2\times \mathbb R^2}\chi_{W_\lambda}(\zeta)\chi_{W_\lambda}(\eta) \,d\zeta d\eta \gtrsim t\lambda^3 = \varepsilon \lambda^{\frac52}
$$
Since $\varepsilon \gg \lambda^{-\frac12}$, we have
\begin{align}\label{I-bound}
|\mathcal F_x(I)(t, \xi)| \gtrsim \varepsilon \lambda^{\frac52}.
\end{align}

Now let us consider $I\!I$. By the same argument as above we have
\begin{align*}
\mathcal F_x(I\!I)(t, \xi) &= \frac{3i}8 \sum_{\pm_1, \cdots, \pm_4} \mp_1 e^{\mp_1it|\xi|}\frac{\xi_1}{|\xi|} \int\!\!\int_{\mathbb R^2\times \mathbb R^2} \mathbf m_{1234} \chi_{W_\lambda}(\zeta-\eta)\chi_{2W_\lambda}(\zeta)\chi_{W_\lambda}(\xi-\eta)\,d\zeta d\eta\\
&= \frac{3i}8 \left(\sum_{\mathcal R}(\cdots) + \sum_{\mathcal R^c}(\cdots)\right).
\end{align*}
The estimate \eqref{res-comp} yields
$$
\left|\frac{3i}8 \sum_{\mathcal R^c}(\cdots)\right| \lesssim \lambda^2.
$$
On the other hand, \eqref{res} leads us to
\begin{align*}
\left|\sum_{\mathcal R}\left(\cdots\right)\right| \lesssim  t(|\sin (t|\xi|) + \varepsilon ) \int\!\!\int_{\mathbb R^2\times \mathbb R^2} \chi_{W_\lambda}(\zeta-\eta)\chi_{2W_\lambda}(\zeta)\chi_{W_\lambda}(\xi-\eta)\,d\zeta d\eta.
\end{align*}
By the choice of $\lambda$, $|\sin(t|\xi|)| \lesssim \varepsilon$. This implies that
\begin{align*}
\left|\sum_{\mathcal R}\left(\cdots\right)\right| \lesssim  \varepsilon\lambda^{-\frac12}\varepsilon \lambda^3 \lesssim \varepsilon^2\lambda^\frac52.
\end{align*}

Combining this with \eqref{I-bound} we conclude that for $\xi \in 2W_\lambda$
$$
|\mathcal F_x(\partial_\delta^3 A_{2}(\delta = 0))(t, \xi)| \gtrsim \varepsilon \lambda^\frac52.
$$
Now let us invoke the necessary-sufficient condition of \eqref{csh-ineq} for the $C^3$ smoothness, from which
we deduce that
$$
\varepsilon \lambda^\frac52 \lambda^{\sigma + \frac34} \lesssim \lambda^{2s + \frac32}\lambda^{\sigma + \frac34}
$$
and hence regardless of $\sigma$, $s$ should be greater than or equal to $\frac12$ for the $C^3$ smoothness. This completes the proof of Theorem \ref{fail-smooth}.

\subsection{Failure of \eqref{c2-csh-ineq} when $\sigma < \frac14$}
Let us set $f = f_1T^1$, $a_0 = a_{0, 2}T^2$ and $\widehat{f_1} = \widehat{a_{0, 2}} = \chi_{ W_\lambda}$.
By taking Fourier transform to \eqref{2nd-der} we have
\begin{align*}
&\mathcal F_x(\partial_\delta^2\phi(\delta = 0)(\xi)\\
& = -\frac14 \sum_{\pm_1, \pm_2, \pm_3} \frac{e^{\mp_1it\sqrt{m^2+|\xi|^2}}}{|\xi|}\int \widetilde{
\widetilde{\mathbf m}}_{123} \chi_{W_\lambda}(\xi-\eta)\left(\mp_3 \sqrt{m^2 + |\eta|^2} \pm_2\frac{\xi_1-\eta_1}{|\xi-\eta|}\right)\chi_{W_\lambda}(\eta)\,d\eta \;[T^2, T^1],
\end{align*}
where
$$
\widetilde{\widetilde{\mathbf m}}_{123} = \frac{e^{it\omega_{123}} - 1}{i\omega_{123}},\quad \omega_{123} = \pm_1\sqrt{m^2+|\xi|^2} - \pm_2|\xi-\eta| - \pm_3|\eta|.
$$
The resonance case $\mathcal R$ occurs only when $\pm_1 = \pm_2 = \pm_3$ for which $|\omega_{123}| \lesssim \lambda^\frac12$. For other cases $|\omega_{123}| \sim \lambda$.

If we take $t = \varepsilon \lambda^{-\frac12}$ and $\xi \in 2 W_\lambda$, then
\begin{align*}
&\mathcal F_x(\partial_\delta^2\phi(\delta = 0)(\xi)\\
& = -\frac14 \sum_{\pm_1 = \pm_2 = \pm_3} \frac{e^{\mp_1it\sqrt{m^2+|\xi|^2}}}{|\xi|}\int t(1 + O_{123}(\varepsilon)) \chi_{{ W_\lambda}}(\xi-\eta)\left(\mp_3 \sqrt{m^2 + |\eta|^2} \pm_2\frac{\xi_1-\eta_1}{|\xi-\eta|}\right)\chi_{{W_\lambda}}(\eta)\,d\eta\;[T^2, T^1]\\
&\qquad-\frac14 \sum_{\mathcal R^c} \frac{e^{\mp_1it\sqrt{m^2+|\xi|^2}}}{|\xi|}\int \widetilde{\widetilde{\mathbf m}}_{123} \chi_{{ W_\lambda}}(\xi-\eta)\left(\mp_3 \sqrt{m^2 + |\eta|^2} \pm_2\frac{\xi_1-\eta_1}{|\xi-\eta|}\right)\chi_{{ W_\lambda}}(\eta)\,d\eta\;[T^2, T^1]\\
&= -\frac t4 \sum_{\pm_1 = \pm_2 = \pm_3}\mp_1 \frac{e^{\mp_1it\sqrt{m^2+|\xi|^2}}}{|\xi|}\int \chi_{{ W_\lambda}}(\xi-\eta)\left(\sqrt{m^2 + |\eta|^2}\right)\chi_{{ W_\lambda}}(\eta)\,d\eta\;[T^2 ,T^1]\\
&\qquad +\frac t4 \sum_{\pm_1 = \pm_2 = \pm_3} \mp_1\frac{e^{\mp_1it\sqrt{m^2+|\xi|^2}}}{|\xi|}\int \chi_{{ W_\lambda}}(\xi-\eta)\left( \frac{\xi_1-\eta_1}{|\xi-\eta|}\right)\chi_{{ W_\lambda}}(\eta)\,d\eta\;[T^2, T^1]\\
&\qquad - \frac t4 \sum_{\pm_1 = \pm_2 = \pm_3}\mp_1 \frac{e^{\mp_1it\sqrt{m^2+|\xi|^2}}}{|\xi|}\int O_{123}(\varepsilon) \chi_{{W_\lambda}}(\xi-\eta)\left(\sqrt{m^2 + |\eta|^2} - \frac{\xi_1-\eta_1}{|\xi-\eta|}\right)\chi_{{ W_\lambda}}(\eta)\,d\eta\;[T^2 ,T^1]\\
&\qquad-\frac14 \sum_{\mathcal R^c} \frac{e^{\mp_1it\sqrt{m^2+|\xi|^2}}}{|\xi|}\int \widetilde{\widetilde{\mathbf m}}_{123} \chi_{{ W_\lambda}}(\xi-\eta)\left(\mp_3 \sqrt{m^2 + |\eta|^2} \pm_2\frac{\xi_1-\eta_1}{|\xi-\eta|}\right)\chi_{{ W_\lambda}}(\eta)\,d\eta\;[T^2, T^1]\\
&=: I + I\!I + I\!I\!I + I\!V.
\end{align*}
A direct calculation gives
$$
|I\!I| \lesssim t\lambda^\frac12 = \varepsilon, \quad |I\!I\!I| \lesssim \varepsilon^2 \lambda,
$$
and for the non-resonance case $\mathcal R^c$, $|\widetilde{\widetilde{\mathbf m}}_{123}|\lesssim\lambda^{-1}$ holds and hence $|I\!V| \lesssim \lambda^\frac12$.

As for $I$ we further take $\lambda = \lambda(\varepsilon, \rho, k)$ such that $$ \frac{k\pi + \frac\pi8}{\varepsilon(1-\rho)} \le \lambda^\frac12 \le  \frac{k\pi + \frac{3\pi}8}{\varepsilon(1+\rho)}$$ for each integer $k$. Then for $\xi \in 2 W_\lambda$
\begin{align*}
|I| &= \frac t2 \frac{|\sin{t|\xi|}|}{|\xi|}\int \chi_{{ W_\lambda}}(\xi-\eta)\left(\sqrt{m^2 + |\eta|^2}\right)\chi_{{ W_\lambda}}(\eta)\,d\eta\;\|[T^2, T^1]\|\\
&= \frac t2 \frac{|\sin{t|\xi|}|}{|\xi|}\int \chi_{{ W_\lambda}}(\xi-\eta)\left(\sqrt{m^2 + |\eta|^2}\right)\chi_{{ W_\lambda}}(\eta)\,d\eta\;\|[T^2, T^1]\|\\
&\gtrsim t |\sin{t|\xi|}|\lambda^\frac32\\
& \gtrsim \varepsilon \lambda.
\end{align*}
Hence we get
$$
|\mathcal F_x(\partial_\delta^2\phi(\delta = 0)(\xi)| \gtrsim \varepsilon \lambda.
$$
Suppose that \eqref{c2-csh-ineq} holds. Then
$$
\varepsilon \lambda \lambda^{s+\frac34} \lesssim \lambda^{s+\frac34}\lambda^{\sigma+\frac34}.
$$
Therefore, $\sigma$ should be greater than equal to $\frac14$.

\section{Appendix}

We introduce a set of infinitesimal generators $T^a$, $a = 1, 2, \cdots, n^2-1$, of Lie algebra $\mathfrak g$ which are traceless Hermitian matrices. These matrices obey the normalization condition ${\rm Tr}(T^a T^b) = 2\delta^{ab}$, where $\delta^{ab}$ is Kronecker delta.
 Now we introduce so-called structure coefficients $f^{ab}_{\phantom{ab}c}$ of the Lie algebra $\mathfrak g$ given by $[T^a, T^b] = if^{ab}_{\phantom{ab}c}T^c$. We write $\phi = \phi_aT^a$. Then $\dfrac{\partial V(\phi,\phi^\dagger)}{\partial\phi^\dagger}$ is a matrix in $\mathfrak{su}(n, \mathbb C)$ and its $a$-th component (with respect to the basis $T^a$) is given by $\dfrac{\partial V(\phi,\phi^\dagger)}{\partial\phi_a^*}$. Since
$$
[\phi_aT^a,\phi_bT^b] = i\phi_a\phi_bf^{ab}_{\phantom{ab}d}T^d, \quad [T^d, T^c] = if^{dc}_{\phantom{dc}e}T^e,
$$
we write
\begin{align*}
\left[[\phi,\phi^\dagger],\phi\right] - v^2\phi &= \left[[\phi_aT^a,\phi_bT^b],\phi_cT^c\right] - v^2\phi_eT^e \\
&= -(f^{ab}_{\phantom{ab}d}f^{dc}_{\phantom{dc}e}\phi_a\phi_b\phi_c + v^2\phi_e)T^e.
\end{align*}
 Then we have

\begin{align*}
V(\phi,\phi^\dagger)&= {\rm Tr}\big( (\left[[\phi,\phi^\dagger],\phi\right]-v^2\phi)^\dagger(\left[[\phi,\phi^\dagger],\phi\right]-v^2\phi)\big) \\
&= (f^{ab}_{\phantom{ab}d}f^{dc}_{\phantom{dc}e}\phi_a^*\phi_b^*\phi_c^* + v^2\phi_e^*) (f^{a'b'}_{\phantom{a'b'}d'}f^{d'c'}_{\phantom{d'c'}e'}\phi_{a'}\phi_{b'}\phi_{c'} + v^2\phi_{e'}){\rm Tr}(T^e T^{e'}) \\
&= 2\sum_{e}(f^{ab}_{\phantom{ab}d}f^{dc}_{\phantom{dc}e}\phi_a^*\phi_b^*\phi_c^* + v^2\phi_e^*) (f^{a'b'}_{\phantom{a'b'}d'}f^{d'c'}_{\phantom{d'c'}e}\phi_{a'}\phi_{b'}\phi_{c'} + v^2\phi_{e}),
\end{align*}
 where we used ${\rm Tr}(T^e T^{e'}) = 2\delta^{ee'}$. Taking the partial derivative with respect to $\phi_a^*$, we have
\begin{align}\label{phi-a}
\dfrac{\partial V(\phi,\phi^\dagger)}{\partial\phi_a^*} = 2\sum_{e}(f^{ab}_{\phantom{ab}d}f^{dc}_{\phantom{dc}e}\phi_b^*\phi_c^* + v^2\delta^{ea}) (f^{a'b'}_{\phantom{a'b'}d'}f^{d'c'}_{\phantom{d'c'}e}\phi_{a'}\phi_{b'}\phi_{c'} + v^2\phi_{e}),
\end{align}
and hence we conclude that the essential terms in $\mathcal V(\phi,\phi^\dagger)$ are linear, cubic and quintic terms of $\phi$ and $\phi^\dagger$. The readers find more discussion on non-abelian gauge symmetry in \cite{sred, wein, yang}.

\section*{Acknowledgements}
Y. Cho was supported in part by NRF-2018R1D1A3B07047782(Republic of Korea). S. Hong was supported by NRF-2018R1D1A3B07047782, NRF-2018R1A2B2006298, and NRF-2016K2A9A2A13003815.



\begin{thebibliography}{00}

\bibitem{bajack} D. Bak, R. Jackiw, {\it Non-abelian Chern-Simons particles and their quantization}, Phys. Rev. D. Vol. 49, No. 12.

\bibitem{bou} N. Bournaveas, {\it Low regularity solutions of the Chern-Simons-Higgs equations in the Lorenz gauge}, Electronic Journal of Differential Equations, Vo1. 2009, (2009), No. 114, 10pp.

\bibitem{boucanma} N. Bournaveas, T. Candy, S. Machihara, {\it A note on the Chern-Simons-Dirac equations in the Coulomb gauge}, DCDS., \textbf{34}, (2014), 2693--2701.

\bibitem{chaeoh} M. Chae, S.-J. Oh, {\it Small data global existence and decay for relativistic Chern-Simons equations}, Annales Henri Poincare, \textbf{18}, (2017), 2123--2198.

\bibitem{chen} S. Chen, X. Han, G. Lozano, F. A. Schaposnik, {\it Existence theorems for non-abelian Chern-Simons-Higgs vortices with flavor}, J. Diff. Eq. \textbf{259}, (2015), 2458--2498.

\bibitem{chernsimons} S.-S. Chern, J. Simons, {\it Characteristic forms and geometric invariants}, The Annals of Mathematics, \textbf{99}, (1974), No. 1, 48--69.


\bibitem{danfoselb} P. D'Ancona, P. Foschi, S. Selberg, {\it Product estimates for wave-Sobolev spaces in 2+1 and 1+1 dimension}, Contemp. Math. 526:125--150.

\bibitem{dunne} G. V. Dunne, Self-dual Chern-Simons Theories, New York: Springer (1995).

\bibitem{dunne2} G. V. Dunne, Aspects of Chern-Simons Theories, New York: Springer (1999).

\bibitem{hele} S. Herr and E. Lenzmann, {\it The Boson star equation with initial data of low regularity}, Nonlinear Anal. \textbf{97} (2014), 125--137.


\bibitem{huh} H. Huh, {\it Low regularity solutions of the Chern-Simons-Higgs equations}, Nonlinearity, \textbf{18}, (2005), 2581--2589.

\bibitem{huh2} \bysame, {\it Cauchy Problem for the Fermion Field Equation
Coupled With the Chern-Simons Gauge}, Lett. in Mathematical Physics \textbf{79} (2007), 75--94.

\bibitem{huhoh} H. Huh and S.-J. Oh, {\it Low regularity solutions to the Chern-Simons-Dirac and the Chern-
Simons-Higgs equations in the Lorenz gauge}, Comm. PDE. \textbf{41} (3) (2016), 375--397.

\bibitem{kaolee} H. Kao, K.-M. Lee, {\it Self-dual $SU(3)$ Chern-Simons Higgs systems}, Phys. Rev. D. Vol. 50, No. 10 (1994)

\bibitem{klama} S. Klainerman, M. Machedon, {\it Space-time estimates for null forms and the local existence theorem}, Comm. Pure Appl. Math. \textbf{46}: 1221--1268.

\bibitem{lee} K.-M. Lee, {\it Self-dual nonabelian Chern-Simons solitons}, Phys. Rev. Lett. \textbf{66} (1991) 553.

\bibitem{mst} L. Molinet, J.C. Saut, and N. Tzvetkov, {\it Ill-posedness issues for the Benjamin-Ono and related equations}, SIAM J. Math. Anal. \textbf{33} (2001), 982--988 (electronic).

\bibitem{oh} S.-J. Oh, {\it Finite energy global well-posedness of the Chern-Simons-Higgs equations in the Coulomb gauge}, (preprint), 2013, available on https://arxiv.org/abs/1310.3955.

\bibitem{oka} M. Okamoto, {\it Well-posedness of the Cauchy problem for the Chern-Simons-Dirac system in two dimensions}, Journal of Hyperbolic Differential Equations, \textbf{10}, No. 4, (2013), 735--771.

\bibitem{pech} H. Pecher, {\it The Chern-Simons-Higgs and the Chern-Simons-Dirac equations in Fourier-Lebesgue spaces}. Discrete Contin. Dyn. Syst.  \textbf{39}  (2019),  4875–-4893.

\bibitem{selb}  S. Selberg, {\it  Bilinear Fourier restriction estimates related to the $2$D wave equation}, Adv.
Diff. Equ. \textbf{16} (2011), 667--690.

\bibitem{selb2} \bysame, {\it Anisotropic bilinear $L^{2}$ estimates related to the $3$D wave equation}, Int. Math. Not. IMRN Art. ID rnn107 (2008), 63pp.

\bibitem{selbtes} S. Selberg, A. Tesfahun, {\it Null structure and local well-posedness in the energy class for the Yang-Mills equations in Lorenz gauge}, J. Eur. Math. Soc. 18 (2016) no. 8, 1729--1752.


\bibitem{sred} M. Srednicki, { Quantum field theory}, Cambridge University Press 2010.

\bibitem{tes} A. Tesfahun, {\it Local well-posedness of Yang-Mills equations in Lorenz gauge below the energy norm}, NoDEA. 22 (2015), 849--875.

\bibitem{wein} S. Weinberg, The Quantum theory of fields, Volume 1: Foundations, Cambridge University Press 2012.

\bibitem{yang} Y. Yang, Solitons in field theory and nonlinear analysis, Springer Monographs in Mathematics (Springer, 2001),
ISBN-10: 038795242X

\bibitem{yuan} J. Yuan, {\it Local well-posedness of Chern-Simons-Higgs system in the Lorenz gauge}, J. Math. Phy. \textbf{52}, 103706, (2011).

\bibitem{yuan2} \bysame, {\it Local well-posedness of non-abelian Chern-Simons-Higgs system in the Lorenz gauge}, J. Math. Phy. \textbf{58}, 071505, (2017).

\end{thebibliography}
\end{document}